\documentclass[a4paper,11pt,reqno]{amsart}
\usepackage{amsmath,amsthm,amssymb}
\usepackage[body={16cm,23cm},centering]{geometry} 
\usepackage{latexsym}
\usepackage{stmaryrd}
\usepackage{graphicx}
\usepackage{subfigure}
\usepackage{overpic}
\usepackage{tikz}
\usetikzlibrary{arrows.meta, decorations.markings, calc, positioning, decorations.pathreplacing}
\usepackage{enumerate}
\usepackage{esint}
\usepackage{xcolor}
\usepackage{bm}
\usepackage[normalem]{ulem}	

\theoremstyle{plain}
\newtheorem{theorem}{Theorem}[section]
\newtheorem{lemma}{Lemma}[section]

\theoremstyle{remark}
\newtheorem{remark}{Remark}[section]
\theoremstyle{definition}
\newtheorem{definition}{Definition}[section]

\numberwithin{equation}{section}

\newcommand{\R}{\mathbb{R}} 
\newcommand{\Om}{\Omega}
 
\newcommand{\p}{\partial}
\renewcommand{\d}{\delta}
\newcommand{\e}{\varepsilon}

\newcommand{\dd}{\mathrm{d}}
\DeclareMathOperator{\cof}{cof}
\DeclareMathOperator{\imG}{im_G}
\DeclareMathOperator{\dist}{dist}
\DeclareMathOperator{\essinf}{essinf}
\DeclareMathOperator{\Det}{Det}
\DeclareMathOperator{\dive}{div}
\renewcommand{\vec}[1]{\bm#1}

\begin{document}
\title[]{A Lavrentiev phenomenon in the neo-Hookean model}
\author{Marco Barchiesi}
\author{Duvan Henao}
\author{Carlos Mora-Corral}
\author{R\'emy Rodiac}
\date{\today}

\address[Marco Barchiesi]{
Dipartimento di Matematica, Informatica e Geoscienze, Universit\`a degli Studi di Trieste,
Via Weiss 2 - 34128 Trieste, Italy.
}
\email{barchies@gmail.com}

\address[Duvan Henao]{Instituto de Ciencias de la Ingenier\'ia, Universidad de O'Higgins. Rancagua, Chile}
\email{duvan.henao@uoh.cl}

\address[Carlos Mora-Corral]{Departamento de Matem\'aticas, Universidad Aut\'onoma de Madrid,
28049 Madrid, Spain  and Instituto de Ciencias Matem\'aticas,
CSIC-UAM-UC3M-UCM, 28049 Madrid, Spain.
}
\email{carlos.mora@uam.es}

\address[R\'emy Rodiac]{Laboratoire J.A. Dieudonn\'e, Universit\'e C\^ote d'Azur, CNRS UNMR 7351,06108, Nice, France.}
\email{remy.rodiac@univcotedazur.fr}

\begin{abstract}
We exhibit a Lavrentiev gap phenomenon for the neo-Hookean energy in three-
dimensional nonlinear elasticity. More precisely, we construct boundary data for
which the infimum of the neo-Hookean energy over deformations satisfying a natural
regularity and invertibility condition is strictly larger than the infimum over the
weak $H^1$-closure of that class. The mechanism underlying the gap is a deformation
with a dipole-type singularity.
\end{abstract}

\maketitle

\section{Introduction}
\subsection{Overview of the problem and statement of the result}
The neo-Hookean model is one of the simplest models in nonlinear elasticity. In this model, 
a deformation  $\vec u: \Om \rightarrow \R^3$ of an elastic body in a reference 
configuration $\Om \subset \R^3$ is assumed to minimise an energy of the form
\begin{equation}\label{eq:neo_Hook_energy}
E(\vec u)=\int_\Om \left[ |D \vec u|^2+ H(|\det D \vec u|) \right] \dd\vec x,
\end{equation}
where \(H: (0, \infty) \rightarrow [0, \infty) \) is a suitable convex function such that 
\begin{align}\label{eq:explosiveH}
\lim_{t\rightarrow \infty} \frac{H(t)}{t}=\lim_{s \rightarrow 0} H(s)=\infty.
\end{align}

The main mathematical difficulty is to identify an admissible class for which the
minimisation of $E$ is both physically meaningful and analytically tractable. Physical admissibility requires the deformation to preserve orientation and exclude interpenetration of matter, so two natural mathematical restrictions for the deformation are that $\vec u$ is one-to-one almost everywhere (a.e.\@), and \(\det D \vec u>0\) a.e\@. As a boundary condition, we prescribe a bi-Lipschitz orientation-preserving map $\vec b:\Om\rightarrow \R^3$.
In fact, for technical convenience, we impose the Dirichlet condition in a strong form: we fix a smooth bounded
domain $\widetilde{\Om}$ compactly contained in $\Om$ and require that each admissible deformation agrees with 
$\vec b$ on the entire complement $\Om\setminus\widetilde{\Om}$, not merely on $\partial\Om$.
This leads naturally to the admissible class
\begin{equation*}\begin{split}
\mathcal{A}= \mathcal{A}(\Omega, \widetilde\Om, \vec b) := \{ \vec u \in H^1(\Om,\R^3) \colon  \vec u = \vec b \text{ in } \Om\setminus\widetilde\Om, \,
 \vec u \text{ is one-to-one a.e.}, &\\  \det D\vec u>0 \text{ a.e.},  \text{ and }  E(\vec u)<\infty &\}.
\end{split}\end{equation*}

However, due to the possibility of cavitation,  i.e., the formation of voids in the material, it was shown in \cite{BallMurat1984} that \(E\) is not lower semicontinuous in  \(\mathcal{A}\). Since the difficulty is the possible appearance of many cavities, one can try to add a term to penalize the formation of cavities in the energy \eqref{eq:neo_Hook_energy}, or to add a condition in the minimisation space to exclude cavitation. This is the approach followed by M\"uller and Spector \cite{MuSp95}, where they also introduced the nowadays well-known condition INV in order to rule out some pathological examples of cavitation. Condition INV, which is a stronger form of invertibility than the property of being one-to-one a.e., informally means that after the deformation, matter coming from any subregion $U$ remains 
enclosed by the image of $\partial U$ and matter coming from outside $U$ remains exterior to the region enclosed 
by the image of $\partial U$. An alternative approach was devised in \cite{HeMo10,HeMo11} where, instead of condition INV, the authors work with one-to-one a.e.\ maps but penalize 
the formation of cavities  by using the ``divergence identities''. These are a generalisation of the identity \(\Det D \vec u=\det D \vec u\), which holds in absence of cavitation, where \(\Det D \vec u :=\frac 13 \dive (\cof D\vec u^T \vec u)\) denotes the distributional Jacobian. Both approaches are successful in treating energies slightly more coercive than \eqref{eq:neo_Hook_energy}, for example energies like \eqref{eq:neo_Hook_energy} where \(|D \vec u|^2\) is replaced by \(|D \vec u|^p\) for some \(p>2\), but fail to provide minimisers for \eqref{eq:neo_Hook_energy}. The difficulty is that, although condition INV and the divergence identities are well defined in the limiting case \(p=2\) (see \cite{CoDeLe03,HeMo12}), they are not sequentially closed with respect to the weak \(H^1\) convergence. This was demonstrated by an example of Conti and De Lellis \cite[Theorem 6.1] {CoDeLe03}, see also \cite{BaHEMoRo_24PUB,Dolezalova_Hencl_Maly_2023,Bouchala_Hencl_Zhu_2024}. This lack of compactness prevents the application of the direct method of the calculus of variations.
\emph{The goal of this paper} is to show that this lack of compactness is also related to a \emph{Lavrentiev phenomenon}. 
More precisely, we investigate the minimisation of \eqref{eq:neo_Hook_energy} in the space
\begin{equation}\label{def:Ar}
 \mathcal{A}^r=\mathcal{A}^r(\Omega,\widetilde \Omega, \vec b) := \{ \vec u \in \mathcal{A} \colon \Om_{\vec b} = \imG(\vec u,\Om) \text{ a.e. and } 
\vec u^{-1}\in W^{1,1}(\Om_{\vec b},\R^3)\},
\end{equation}
where $\Om_{\vec b}:=\vec b(\Om)$ and $\imG(\vec u, \Om)$ is the geometric image defined in Definition \ref{def:geometric_image}.
Here, the superscript $r$ stands for \emph{regular}. Indeed, by using \cite[Lemma 2.10]{BaHEMoRo_24_SIAM} and \cite[Lemma 5.1]{BaHeMo17}, one can show that maps in \(\mathcal{A}^r\) are regular in the sense that they satisfy  condition INV and the divergence identities (and, hence, they do not open any cavity). Our main result is the following.

\begin{theorem}\label{th:main1}
Let \(\Omega=B(\vec 0,4)\), and assume that \(H\) is convex and satisfies \eqref{eq:explosiveH} and
\begin{equation}\label{eq:growth condition}
    H(t)\leq c\, t^{-\alpha}
    \quad \text{as } t\to 0^+
    \qquad \text{and} \qquad
   H(t)\leq c\, t^\beta
   \quad \text{as } t\to \infty,
\end{equation}
 for some $\alpha<\tfrac{1}{3}$, $\beta < \tfrac{3}{2}$, and $c>0$. Then, there exist a subdomain  \(\widetilde \Om \subset \subset \Omega\) and a bi-Lipschitz homeomorphism \(\vec b : \Omega \rightarrow \R^3\) such that 
\begin{equation}\label{eq:Lavrentiev}
\inf\{E(\vec u) \colon \vec u \in \mathcal{A}^r\} > \inf\{E(\vec u) \colon \vec u \in \overline{\mathcal{A}^r}\},
\end{equation}
where $\overline{\mathcal{A}^r}$ is the closure of $\mathcal{A}^r$ with respect to the weak convergence in $H^1$.
\end{theorem}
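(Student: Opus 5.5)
The plan is to build, inside $\overline{\mathcal{A}^r}$, a limiting map $\vec u_0$ with a dipole singularity, in the spirit of the Conti--De Lellis example. We then choose the boundary datum so that every regular competitor pays a definite extra amount of energy compared with $\vec u_0$.

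\textbf{Step 1: the singular competitor.} We fix two points $\vec a_\pm$ at distance $2\varepsilon$ in $B(\vec 0,1)$. The map $\vec u_0$ opens a cavity at $\vec a_+$ and closes it at $\vec a_-$: the segment $[\vec a_-,\vec a_+]$ is mapped to a two-dimensional sheet (a disc $D$), and the material near the segment is stretched around that sheet. Outside a neighbourhood of the segment, $\vec u_0$ is the identity composed with a bi-Lipschitz map. This defines $\vec b$, with $\widetilde\Om\supset\supset$ the support of the construction, say $\widetilde\Om=B(\vec 0,2)$.

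We show $\vec u_0\in\overline{\mathcal A^r}$ by exhibiting regular maps $\vec u_n\in\mathcal A^r$ with $\vec u_n\rightharpoonup\vec u_0$ in $H^1$ and $\sup_n E(\vec u_n)<\infty$. These are obtained by replacing the segment with a thin tube of radius $r_n\to0$ that is squeezed to the disc. The Dirichlet cost is $O(1)$ because the squeezing in codimension two costs $\int |D\vec u|^2\sim \varepsilon\cdot\int_{r_n}^{1} r^{-2}\,r\,\dd r\cdot r^2$-type estimates that stay bounded. The growth hypotheses $\alpha<1/3$ and $\beta<3/2$ are exactly what keep $\int H(\det D\vec u_n)$ bounded: $H(t)\le c\,t^{-\alpha}$ controls the highly compressed region, and $t^{\beta}$ controls the expanded one.

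We then compute $E(\vec u_0)=E_{\mathrm{ref}}+ \mathcal{O}(\varepsilon^2)$, or more precisely $E(\vec u_0)\le E_{\mathrm{ref}}-\kappa$. Here $E_{\mathrm{ref}}$ is a lower bound for regular maps, described next, and $\kappa>0$ is a saving coming from the fact that $\vec u_0$ "cheats" by creating new surface $D$ at no cost. Hence the right-hand side of \eqref{eq:Lavrentiev} is at most $E(\vec u_0)$.

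\textbf{Step 2: lower bound for $\mathcal A^r$.} For $\vec u\in\mathcal A^r$, the conditions $\imG(\vec u,\Om)=\Om_{\vec b}$ and $\vec u^{-1}\in W^{1,1}$ yield INV and the divergence identities, via \cite[Lemma 2.10]{BaHEMoRo_24_SIAM} and \cite[Lemma 5.1]{BaHeMo17}. We then use the classical lower bound $|D\vec u|^2\ge 2|\cof D\vec u|\ge$ (area element), together with the area formula along spheres or slices. For a.e.\ slice, $\vec u$ restricted to a sphere $\partial B(\vec x,r)$ is a degree-one map onto the boundary of the image region. The image surface must therefore enclose the corresponding region of $\Om_{\vec b}$ without tearing, which gives $\int|D\vec u|^2\ge 2\int \mathcal{H}^2(\vec u(\partial B_r))\,\dd r\ge E_{\mathrm{ref}}$ by the isoperimetric-type/area argument. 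We design $\vec b$ so that this bound is genuinely larger. A natural design has $\vec b$ fold the target so that the image of a small ball must cover two separated regions; a regular map then needs connecting surface area, whereas $\vec u_0$ supplies it for free via the dipole sheet.

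\textbf{Main obstacle.} The delicate part is the quantitative comparison: showing that the strict inequality survives, i.e.\ that the area gained by the dipole exceeds the extra Dirichlet and $H$-cost of $\vec u_0$ near the segment. I would first try a scaling argument. Take the dipole length $\varepsilon$; the free surface saved is of order $\varepsilon^2$, while the penalty of the construction scales like $\varepsilon^2$ times a constant that can be made small by choosing the profile (using convexity of $H$ and $H(1)$ near its minimum). If that fails, I would take $\vec b$ as a strongly anisotropic compression, so that the lower bound from cofactor areas is far from being attained by regular maps.
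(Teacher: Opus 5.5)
There is a genuine gap, and it sits where you put your ``main obstacle'': Step~2 never produces a lower bound on $\inf_{\mathcal A^r}E$ that your singular competitor actually beats.

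\textbf{Why Step~2 does not close.} The slicing inequality $\int|D\vec u|^2\ge 2\int\mathcal H^2(\vec u(\partial B_r))\,\dd r$ holds equally for $\vec u_0$. You never quantify what INV adds to it, so no $E_{\mathrm{ref}}$ is ever defined. A scaling comparison is also inconclusive. For a dipole of length $L$, the Dirichlet and $H$ contributions of the construction and the extra cost paid by regular maps all scale like $L^3$. That extra cost is the area $\sim L^2$ of the surface across which the inverse jumps \emph{times} the jump amplitude $\sim L$, not an area alone.

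The missing ingredient is the compactness and lower semicontinuity theorem for the relaxed energy $F(\vec u)=E(\vec u)+2\|D^s\vec u^{-1}\|$ on $\mathcal B\supset\mathcal A^r$. It says that if $\vec u_j\in\mathcal A^r$ have bounded energy, then up to a subsequence $\vec u_j\rightharpoonup\vec w$ and $\liminf_j E(\vec u_j)\ge E(\vec w)+2\|D^s\vec w^{-1}\|$. This is the quantitative price of regularity, and it is measured by the jump of the inverse.

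Your picture of the dipole (a segment opened into a sheet with material wrapped around it) is not the relevant one. In the Conti--De Lellis map $\vec v$, a cavity nucleated at $\vec 0'$ is filled by material of the half-ball $a$ passing through $\vec 0$, so $\vec v^{-1}$ jumps across the sphere $\Gamma$ with amplitude $1$. Moreover, $\vec v$ collapses not only the segment but also two punctured discs and a hemisphere to the origin. It is therefore regular only off the whole set $U$, not off a neighbourhood of the segment.

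\textbf{Why a fixed datum is not enough, and what the paper does instead.} With one fixed $\widetilde\Omega$ and one fixed $\vec b$, nothing forces near-minimisers in $\mathcal A^r$ to inherit the singularity; they can undo the dipole inside $\widetilde\Omega$. A direct comparison would then require knowing both infima.

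The paper sidesteps this non-constructively. It takes $\widetilde\Omega=U_\delta$ and $\vec b=\vec b_\delta$, a bi-Lipschitz patch equal to $\vec v$ off $U_\delta$ with $E(\vec b_\delta)$ bounded uniformly in $\delta$. This uniform bound is where $\alpha<\frac13$ and $\beta<\frac32$ are used, through the integrability of $H(\det D\vec v)$ in region $e$. The argument then runs by contradiction:
\begin{enumerate}
\item If the gap $\lambda$ failed for every $\delta$, there would be $\vec w_\delta\in\mathcal A^r_\delta\subset\mathcal A^r_1$ with $E(\vec w_\delta)\le E(\vec b_\delta)$ and $E(\vec w_\delta)<\inf_{\overline{\mathcal A^r_\delta}}E+\lambda$.
\item A weak limit $\vec w$ along $\delta_j\to0$ agrees with $\vec v$ outside $U$. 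Hence $\vec w^{-1}$ maps $\vec v(e)$ to $e$ and jumps across $\Gamma$ with amplitude at least $1$.
\item Therefore $\|D^s\vec w^{-1}\|\ge\pi$, and the theorem above gives $\liminf_j E(\vec w_{\delta_j})\ge E(\vec w)+2\pi$.
\item By the nesting $\mathcal A^r_{\delta'}\subset\mathcal A^r_\delta$, the limit satisfies $\vec w\in\overline{\mathcal A^r_\delta}$ for every $\delta$, so $\inf_{\overline{\mathcal A^r_{\delta_j}}}E\le E(\vec w)$. Combined with the previous steps this contradicts $\lambda<2\pi$.
\end{enumerate}
Without this device, or some equivalent way of forcing a jump of the inverse in every limit of regular competitors, your Steps~1--2 cannot yield the strict inequality.
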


Several remarks are in order concerning this theorem. First, it is not known if the infimum on the right-hand side of \eqref{eq:Lavrentiev} is attained. This is because we do not know if \( \det D\vec u_n \rightharpoonup \det D\vec u\) weakly in \(L^1\) whenever \( (\vec u_n)_n\subset \mathcal{A}^r\) is a minimising sequence for \(E\). Nevertheless, with the same proof used for Theorem \ref{th:main1} we can provide slightly different Lavrentiev phenomena where the infimum in $\overline{\mathcal{A}^r}$ is attained; see Section \ref{sec:variants}. Indeed, it was shown in \cite[Theorem 5.3]{Dolezalova_Hencl_Molchanova_2024} and \cite[Theorem 4.3]{Kalayanamit_2025} that the infimum of \(E\) in the sequential weak closure in \(H^1\) of homeomorphisms in \(H^1\) satisfying \(\det D\vec u\geq 0\) a.e.\@, Lusin's condition (N) and agreeing with \(\vec b\) on \(\Omega \setminus \widetilde \Omega\) is attained. However, a slight modification of Theorem \ref{th:main1} (see Theorem \ref{th:main_diffeo}) shows that, under assumption \eqref{eq:growth condition}, the minimisers they have obtained must have some singularity. Hence, the classical strategy of the calculus of variations, which consists of finding a minimiser of the energy in a larger space and then showing that minimisers are more regular, cannot work in the presence of a Lavrentiev phenomenon. This is an argument for using a relaxed energy instead of the original energy \(E\); this strategy was developed in  \cite{BaHEMoRo_23PUB,BaHEMoRo_24PUB,BaHEMoRo_24_SIAM}. 

Second, we notice that if the convex function \(H\) in \eqref{eq:neo_Hook_energy} satisfies \eqref{eq:explosiveH} and if there exists \(c>0\) such that 
\begin{equation}\label{eq:more_explosiveH}
c^{-1} H(t)\leq H(2t)\leq c\, H(t), \quad H(t) \geq \frac{1}{t^2}, \text{ for all  } t\in (0,\infty),
\end{equation}
then a weak \(H^1\) limit of orientation preserving homeomorphisms in \(\mathcal{A}\) does satisfy condition INV; see \cite{Dolezalova_Hencl_Maly_2023}.
This suggests that, under the growth condition \eqref{eq:more_explosiveH} there is no Lavrentiev gap phenomenon for \(E\) as the one in Theorem \ref{th:main1}. 

Third, it was recently shown in \cite{Campbell_Dolezalova_Hencl_2025} that even less coercive energies than \eqref{eq:neo_Hook_energy} admit minimisers in the sequential weak closure of some Sobolev, orientation preserving homeomorphisms. These energies are of the form \(E(\vec u)=\int_\Omega \left[ |D \vec u|^p +H(\det D \vec u)\right]\) with \(1<p<2\). It would be interesting to know if a Lavrentiev gap phenomenon holds in that case. 

Fourth, we observe that there is a strong analogy between the problem of minimising the neo-Hookean energy among regular maps and the problem of minimising the Dirichlet energy for maps \(\vec u :\Omega \subset \R^3\rightarrow \mathbb{S}^2\) among continuous maps. This analogy was already observed in \cite{GiMoSo98I,GiMoSo98II} and shows that the lack of compactness of both problems is caused by maps with dipole-like singularities. From that point of view, our Theorem \ref{th:main1} can be seen as an analogue of \cite[Theorem B]{Hardt_Lin_1986}. We also refer to \cite{Hardt_Lin_1992} and \cite{Mazowiecka_Strzelecki_2017} for more on the Lavrentiev gap phenomenon for harmonic maps, in particular we can ask if Lavrentiev gap phenomena such as the one in Theorem \ref{th:main1} hold for many boundary data, since it is the case in the context of harmonic maps, as shown in  \cite{Mazowiecka_Strzelecki_2017}. 
In the next section we present several Lavrentiev gap phenomena discovered in nonlinear elasticity.

\subsection{Lavrentiev gap phenomena in elasticity}

In nonlinear elasticity, the Lavrentiev phenomenon was first observed by Ball~\cite{Ball1982} for genuinely cavitating deformations, i.e., with cavities that actually open a hole. 
This is not the mechanism underlying Theorem \ref{th:main1}, in which a cavity is formed and then filled from material coming from another cavity, so that $\Om_{\vec b} = \imG(\vec u,\Om)$ a.e.\@
From the analytical point of view, this is reflected in the fact that a true cavitation map cannot be approximated within $\mathcal{A}^r$ 
under a uniform bound on the neo-Hookean energy, because such a bound would force the Jacobians to be equi-integrable. 
By contrast, the dipole singularity relevant here does admit an approximation by maps in $\mathcal{A}^r$ with equibounded neo-Hookean energy. 

The question whether the Lavrentiev phenomenon can occur under growth conditions on the stored-energy density that imply
continuity of all finite-energy deformations was raised by Ball and Mizel~\cite{BaMi85} (see also Ball~\cite{Ball02}).
This question was answered affirmatively in two dimensions by Foss~\cite{Foss03} and
Foss, Hrusa and Mizel~\cite{FoHrMi03a,FoHrMi03b}, who constructed examples on a disk sector with a smooth,
polyconvex, frame-indifferent energy density $W$ satisfying $W(F)\ge c\,|F|^p-C$ for some $p>2$ and
$W(F)\to\infty$ as $\det F\to 0^+$.
In those examples, the Lavrentiev gap is driven by the local behaviour of almost minimisers near the
tip of the sector, interacting with prescribed boundary conditions. 

More recently, Almi, Kr\"omer and Molchanova~\cite{AlKrMo24} produced Lavrentiev gaps in 
dimensions $d=2$ and $d=3$ for a neo-Hookean-type density $W(F)=|F|^p+\gamma(\det F)^{-q}$ with $p>d$
and $1 < q < p/(p-2)$.
Their admissible class consists of orientation-preserving $W^{1,p}$-maps satisfying the
Ciarlet--Ne\v{c}as condition~\cite{CiNe87}, which (in the regime $p>d$, hence for continuous maps)
implies almost everywhere injectivity.
The gap in~\cite{AlKrMo24} separates $W^{1,p}$-deformations from $W^{1,\infty}$-deformations.
It is generated by a global self-contact mechanism: for a suitable choice of boundary conditions on a
disconnected domain, one can compress two cross-sections onto the same point (or line), creating
self-contact while keeping the energy arbitrarily small; by contrast, Lipschitz competitors satisfying
the Ciarlet--Ne\v{c}as condition are necessarily injective everywhere (as a consequence of
Reshetnyak's theorem for mappings of finite distortion) and must therefore ``go around'' the obstruction,
incurring a strictly positive energy cost.

The present paper exhibits a Lavrentiev phenomenon of a fundamentally different nature, distinguished
from the existing literature in two complementary respects:

\textbf{(i) Natural coercivity exponent and physical energy.}
We work in three dimensions with the standard neo-Hookean functional
\begin{equation*}
  E(\vec u)=\int_\Om \Bigl(|D\vec u|^2+H(|\det D\vec u|)\Bigr)\,\dd\vec x.
\end{equation*}
The coercivity exponent here is $p=2<d=3$, namely the physically natural regime in which Sobolev embedding does not yield continuity of finite-energy deformations.
By contrast, the examples in~\cite{AlKrMo24,Foss03,FoHrMi03a,FoHrMi03b} are constructed in the continuity regime $p>d$.

\smallskip
\textbf{(ii) Gap driven by inverse regularity and lack of weak closure.}
In our setting, the energy gap does not arise from comparing two regularity levels of the forward
deformation $\vec u$ within a fixed notion of injectivity.
Rather, it stems from the failure of the physically motivated class $\mathcal A^r$ to be
sequentially closed under weak $H^1$-convergence.
Accordingly, the gap separates $\mathcal A^r$ from its weak closure $\overline{\mathcal A^r}$ and is
governed by the regularity of the \emph{inverse} deformation (Sobolev versus $BV$), rather than an
additional regularity of $\vec u$ itself.

This should be contrasted with \cite{AlKrMo24}. There, the admissibility condition is stable under weak convergence, but the class remains broad enough to permit low-energy deep self-contact. 
Here, by contrast, the admissible class rules out interpenetration but loses weak compactness. 
Thus \cite{AlKrMo24} points to an admissible class that is too large, whereas our example points to one that is too small.

\subsection{Strategy of the proof and organisation of the paper}

The proof of Theorem \ref{th:main1} is based on results that appeared in the series of papers \cite{BaHEMoRo_23PUB,BaHEMoRo_24PUB,BaHEMoRo_24_SIAM} and on the use of the Conti--De Lellis dipole map and its regularised sequence. More precisely, we have shown in \cite{BaHEMoRo_23PUB,BaHEMoRo_24PUB,BaHEMoRo_24_SIAM} that any sequence of regular maps approximating the Conti--De Lellis map produces a jump of energy of  at least \(2\pi\) in the limit. By using this observation and by constructing a suitable interpolation between the Conti--De Lellis map and a regularised sequence we can construct a boundary data for which  we obtain the desired gap phenomenon. 

The paper is organised as follows. Section \ref{sec:preliminaries} recalls some results from \cite{BaHEMoRo_23PUB,BaHEMoRo_24PUB,BaHEMoRo_24_SIAM} and the basic properties of the Conti--De Lellis map. These ingredients are used in Section \ref{sec:Lavrentiev} to prove our main result, Theorem \ref{th:main1}. Section \ref{sec:variants} presents two variants in which the infimum over the
larger class is attained. The main technical ingredient in the proof of Theorem \ref{th:main1} is the construction of a boundary data obtained from the Conti--De Lellis map and its regularised sequence. That construction is deferred to Appendix \ref{app:proof theo approx},  after Appendices \ref{app:dipole full} and \ref{app:dipole approx} where the precise definitions of the Conti--De Lellis map and its regularised sequences are given.

\section{The relaxed energy and the Conti--De Lellis map}\label{sec:preliminaries}
\subsection{The relaxed energy}

In \cite{BaHEMoRo_23PUB,BaHEMoRo_24PUB,BaHEMoRo_24_SIAM} the authors have proposed an approach to the minimisation problem for $E$
based on relaxation, by providing a larger space $\mathcal{B}\supset\mathcal{A}^r$ that is \emph{compact} for sequences with 
equibounded energy. In order to define this space we first define the \emph{geometric image} of a map \( \vec u\in \mathcal{A}\). We recall that maps in \(H^1(\Omega,\R^3)\) are approximately differentiable a.e.; see, e.g., \cite[Theorem 6.4]{Evans_Gariepy_2015}.
We denote the approximate gradient by $\nabla \vec u$ and the distributional derivative by $D \vec u$.

\begin{definition}\label{def:Om0}
Let $\vec u$ be approximately differentiable a.e.\ and such that $\det D \vec u\neq 0$ a.e. We define $\Om_0$ as the set of $\vec x\in \Om$ for which the following are satisfied:
\begin{enumerate}
\item the approximate differential of $\vec u$ at $\vec x$ exists and equals $D \vec u(\vec x)$,
\item there exist $\vec w\in C^1(\R^3,\R^3)$ and a compact set $K \subset \Om$ of density $1$ at $\vec x$ such that $\vec u|_{K}=\vec w|_{K}$ and $\nabla \vec u|_{K}=D \vec w|_{K}$,
\item $\det \nabla \vec u(\vec x)\neq 0$.
\end{enumerate}
\end{definition}
From \cite[Theorem 3.1.8]{Federer69}, 
Rademacher's Theorem and Whitney's Theorem we infer that \( \Om_0\) is a set of full Lebesgue measure in \( \Om \), i.e., \( |\Om \setminus \Om_0|=0\). 

\begin{definition}\label{def:geometric_image}
For any measurable set $A$ of $\Om$, the geometric image of $A$ under an a.e.\ approximately differentiable map $\vec u$ is defined by  
\begin{equation*}
\imG(\vec u,A) : =\vec u(A\cap \Om_0) ,
\end{equation*}
with \(\Om_0\) as in Definition \ref{def:Om0}.
\end{definition}

The notion of geometric image is used in the definition of the space
\begin{equation*}\begin{split}
\mathcal{B}:= \{\vec u \in H^1(\Om,\R^3) \colon & \vec u = \vec b \text{ in } \Om\setminus\widetilde\Om, 
 \, \vec u \text{ is one-to-one a.e.}, \, \det D\vec u\neq 0 \text{ a.e.}, \\
&\Om_{\vec b} = \imG(\vec u,\Om) \text{ a.e.}, \, \vec u^{-1}\in BV(\Om_{\vec b},\R^3), \text{ and }  E(\vec u)<\infty\} .
\end{split}\end{equation*}
On this set \(\mathcal{B}\) it is possible to define a \emph{lower semicontinuous} energy $F$ extending $E$, namely
\begin{equation*}\label{eq:F}
F(\vec u):= E(\vec u)+2 \| D^s \vec u^{-1} \| \text{ for } \vec u \in \mathcal{B}.
\end{equation*}
Here $D^s \vec u^{-1}$ is the singular part of the distributional gradient of the inverse (which is a matrix-valued Radon measure), 
and $\|D^s \vec u^{-1}\|$ is its norm.
Then, by using the direct method of the calculus of variations, one can obtain that the energy $F$ admits 
a minimiser $\vec u$ on $\mathcal{B}$. In this way the existence problem of a minimiser for $E$ is reduced to showing 
that $\vec u$ belongs to $\mathcal{A}^r$. This is summarised in the following result.

\begin{theorem}\cite[Theorem 1.1]{BaHEMoRo_24_SIAM} \label{main theorem}
Let $\{\vec u_j\}_j$ be a sequence in $\mathcal{B}$ such that $\{F(\vec u_j)\}_j$ is equibounded.
Then there exists $\vec u\in\mathcal{B}$ such that, up to a subsequence,
$\vec u_j \rightharpoonup \vec u$ in $H^1(\Om,\R^3)$ and 
\begin{equation*}
\liminf_{j\to\infty} F(\vec u_j)\geq F(\vec u).
\end{equation*} 
In particular, the energy $F$ has a minimiser in $\mathcal{B}$. 
\end{theorem}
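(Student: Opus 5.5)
The plan is to follow the direct method of the calculus of variations, splitting the statement into a compactness part and a lower semicontinuity part, with the second as the real content.

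\emph{Compactness.} By definition $F(\vec u_j)\ge E(\vec u_j)\ge \|D\vec u_j\|_{L^2}^2$. Since $\vec u_j=\vec b$ on the open set $\Om\setminus\widetilde\Om$, a Poincaré-type inequality gives a uniform $H^1$ bound. We therefore extract a subsequence with $\vec u_j\rightharpoonup\vec u$ in $H^1$, $\vec u_j\to\vec u$ a.e., and $D\vec u_j\rightharpoonup D\vec u$ weakly in $L^2$.

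\emph{Control of the Jacobians.} The superlinear growth $H(t)/t\to\infty$ in \eqref{eq:explosiveH} and the bound $\int H(|\det D\vec u_j|)\le C$ make $\{\det D\vec u_j\}$ equi-integrable. Up to a subsequence, $\det D\vec u_j\rightharpoonup \theta$ in $L^1$. The key identification step is to show $\theta=\det D\vec u$. For this I would use that $\vec u_j$ is one-to-one a.e.\ with $\imG(\vec u_j,\Om)=\Om_{\vec b}$ a.e. Its inverse $\vec v_j:=\vec u_j^{-1}$ is bounded in $BV$, because $|D\vec v_j|(\Om_{\vec b})$ is controlled by $\int|\cof D\vec u_j|$ plus $\|D^s\vec v_j\|$, and $\int|\cof D\vec u_j|\le C\int|D\vec u_j|^2$. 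Hence $\vec v_j\to\vec v$ in $L^1$. Using the change of variables formula
\[\int_\Om \varphi(\vec u_j)\,\det D\vec u_j\,\psi\,\dd\vec x=\int_{\Om_{\vec b}}\varphi(\vec y)\,\psi(\vec v_j(\vec y))\,\dd\vec y,\]
passing to the limit, and comparing with the formula for $\vec u$, one identifies $\theta=\det D\vec u$, $\vec v=\vec u^{-1}$, and $\imG(\vec u,\Om)=\Om_{\vec b}$ a.e. Equi-integrability also keeps the mass of the image from concentrating, which gives injectivity a.e.\ and $\det D\vec u\neq0$ a.e. This proves $\vec u\in\mathcal B$.

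\emph{Lower semicontinuity.} The term $\int|D\vec u|^2$ is weakly lsc. By convexity of $H$ and the weak $L^1$ convergence of the Jacobians, $\int H(\det D\vec u)$ is lsc. The main obstacle is the defect term: Dirichlet energy lost to concentration must be recovered as $2\|D^s\vec u^{-1}\|$ in the limit. Taking only $\liminf(\|D\vec u_j\|^2+2\|D^s\vec v_j\|)$ is not enough. Instead I would localise with measures: let $\mu$ be the weak-$*$ limit of $|D\vec u_j|^2\,\dd\vec x$ and $\nu$ that of $|D\vec v_j|$, then prove a pointwise inequality of the form
\[2\,|D^s\vec v|\le(\text{defect of }\mu\text{ pushed forward})+2\,\nu^s.\]
The mechanism is that along a jump surface of $\vec v$, thin regions of the domain are stretched by $\vec u_j$ into surfaces whose area is at most $\tfrac12\int|D\vec u_j|^2$, by the inequality $|\cof A|\le\tfrac12|A|^2$. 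A slicing or blow-up argument at $|D^s\vec v|$-a.e.\ point, combined with the surface-area interpretation of $|\cof D\vec u_j|$ through the area formula for $\vec v_j$, should give the factor $2$.

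Summing the three contributions gives $\liminf F(\vec u_j)\ge F(\vec u)$. Existence of a minimiser then follows: $\mathcal B\supset\mathcal A^r$ is nonempty because $\vec b\in\mathcal A^r$ whenever $E(\vec b)<\infty$, and we apply the above to a minimising sequence.
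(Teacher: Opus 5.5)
You prove compactness and $\vec u\in\mathcal B$ correctly. The $BV$ bound on $\vec u_j^{-1}$ plus the change-of-variables identity is the right tool. One correction there: $\theta>0$ a.e.\ comes from $\int_\Om H(\theta)\le\liminf_j\int_\Om H(\det D\vec u_j)<\infty$ and $H(0^+)=\infty$, not from equi-integrability. Injectivity a.e.\ and the identification of $\theta$ then follow from your graph identity.

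\textbf{The gap.} The lower semicontinuity of $\int|D\vec u|^2+2\|D^s\vec u^{-1}\|$ is the real content of the theorem, and you leave it at ``a slicing or blow-up argument should give the factor $2$''. Moreover, the localised inequality you aim for would not close the argument as formulated:
\begin{enumerate}
\item If $\mu$ is the weak-$*$ limit of $|D\vec u_j|^2\,\dd\vec x$ on $\Om$, its defect can only be transported to the deformed configuration by the limit map. That loses exactly the relevant information. For the approximants $\vec u_\e$ of the Conti--De Lellis map, the energy concentrates on the segment $S$, which $\vec v$ collapses to the origin, whereas $D^s\vec v^{-1}$ is carried by the sphere $\Gamma$.
\item If $\nu$ is the weak-$*$ limit of $|D\vec u_j^{-1}|$, then $2|D^s\vec u^{-1}|\le 2\nu^s$ is trivially true. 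But $\nu^s(\Om_{\vec b})$ is not bounded by $\liminf_j\|D^s\vec u_j^{-1}\|$: for the regular approximants $D^s\vec u_\e^{-1}=0$, while $\nu^s\ge|D^s\vec v^{-1}|\neq0$. So summing total masses does not give $\liminf_j F(\vec u_j)\ge F(\vec u)$.
\item The inequality $|\cof A|\le\frac12|A|^2$ is false in the Frobenius norm (take $A$ the identity). What holds is $|\cof A|_{\mathrm{op}}\le\frac12|A|^2$.
\end{enumerate}

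\textbf{The missing idea.} Localise in the deformed configuration, through preimages under $\vec u_j$, before passing to the limit; then no blow-up is needed.

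Fix a compact $K\subset\Om_{\vec b}$ with $|K|=0$ and $|D^s\vec u^{-1}|(\Om_{\vec b}\setminus K)<\eta$, and set $V_\delta:=\{\vec y\in\Om_{\vec b}:\dist(\vec y,K)<\delta\}$. The absolutely continuous part of $D\vec u_j^{-1}$ is $(D\vec u_j)^{-1}\circ\vec u_j^{-1}$. Hence the area formula gives
\[
|D^a\vec u_j^{-1}|(V_\delta)=\int_{\{\vec u_j\in V_\delta\}}|\cof D\vec u_j|_{\mathrm{op}}\,\dd\vec x\le\frac12\int_{\{\vec u_j\in V_\delta\}}|D\vec u_j|^2\,\dd\vec x .
\]
Lower semicontinuity of the total variation on the open set $V_\delta$ then yields
\[
2\|D^s\vec u^{-1}\|-2\eta\le 2|D\vec u^{-1}|(V_\delta)\le\liminf_{j\to\infty}\Big(\int_{\{\vec u_j\in V_\delta\}}|D\vec u_j|^2\,\dd\vec x+2|D^s\vec u_j^{-1}|(V_\delta)\Big).
\]
Here total variations are taken in operator norm, which is harmless for $D^s\vec u^{-1}$ since it is rank-one.

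On the complement: for all but countably many $\delta$, $\mathbf 1_{\{\vec u_j\notin V_\delta\}}\to\mathbf 1_{\{\vec u\notin V_\delta\}}$ a.e. Hence $\liminf_j\int_{\{\vec u_j\notin V_\delta\}}|D\vec u_j|^2\ge\int_{\{\vec u\notin V_\delta\}}|D\vec u|^2$.

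Add the two estimates and let $\delta\to0$, using $|\{\vec u\in K\}|=0$ (from the area formula and $\det D\vec u\neq0$ a.e.); then let $\eta\to0$. This gives
\[
\liminf_j\Big(\int|D\vec u_j|^2+2\|D^s\vec u_j^{-1}\|\Big)\ge\int|D\vec u|^2+2\|D^s\vec u^{-1}\| .
\]
The $H$-term is handled by convexity, as you say.
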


\subsection{The Conti--De Lellis map}

Since the singular map $\vec v$ of Conti--De Lellis plays a central role in our construction, 
we briefly describe it and provide pictures here; for the details we refer the reader to Appendix \ref{app:dipole full}. 
The Conti--De Lellis map is an axisymmetric map defined in $B(\vec 0,4)$ with boundary condition equal to the identity. 
We remark that $\vec v\in\mathcal{B}$ and $\det D\vec v>0$ a.e., but 
$\vec v\notin\mathcal{A}^r$.
Indeed, the third component of the inverse is not Sobolev, but it belongs to the class $SBV$ 
(special functions of bounded variation). 
The key feature of $\vec v$ is that it brings into contact two portions of the body that were initially 
at unit distance apart, namely, the half-balls
\begin{equation*}
a:=\{\vec x:\, x_1^2 +x_2^2 +x_3^2 <1,\, x_3 < 0\}
\quad \text{and}\quad 
e:= \{\vec x:\, x_1^2 +x_2^2 +(x_3-1)^2<1,\, x_3 >1\}
\end{equation*}
(see Figure~\ref{fig:regions})
are put  in contact with each other across the ``bubble''
\begin{equation*}
\Gamma:=\{(y_1,y_2,y_3):\, y_1^2 + y_2^2 + (y_3-\tfrac{1}{2})^2 = (\tfrac{1}{2})^2\},
\end{equation*}
which in turn comes entirely from only two singular points: the origin $\vec 0$ and $\vec 0'=(0,0,1)$.
At $\vec 0'$ a cavity is nucleated and subsequently filled by material originating from the half-ball $a$,
which passes through the origin. 
We refer to this structure as a \emph{dipole}.
The jump set of the third component of the inverse coincides with the sphere $\Gamma$, and the amplitude of 
the jump is given by the distance between the poles $\vec 0$ and~$\vec 0'$. 
Consequently $F(\vec v)=E(\vec v)+2\pi$.

\begin{figure}
\centering
 \begin{overpic}[width=1.05\linewidth]{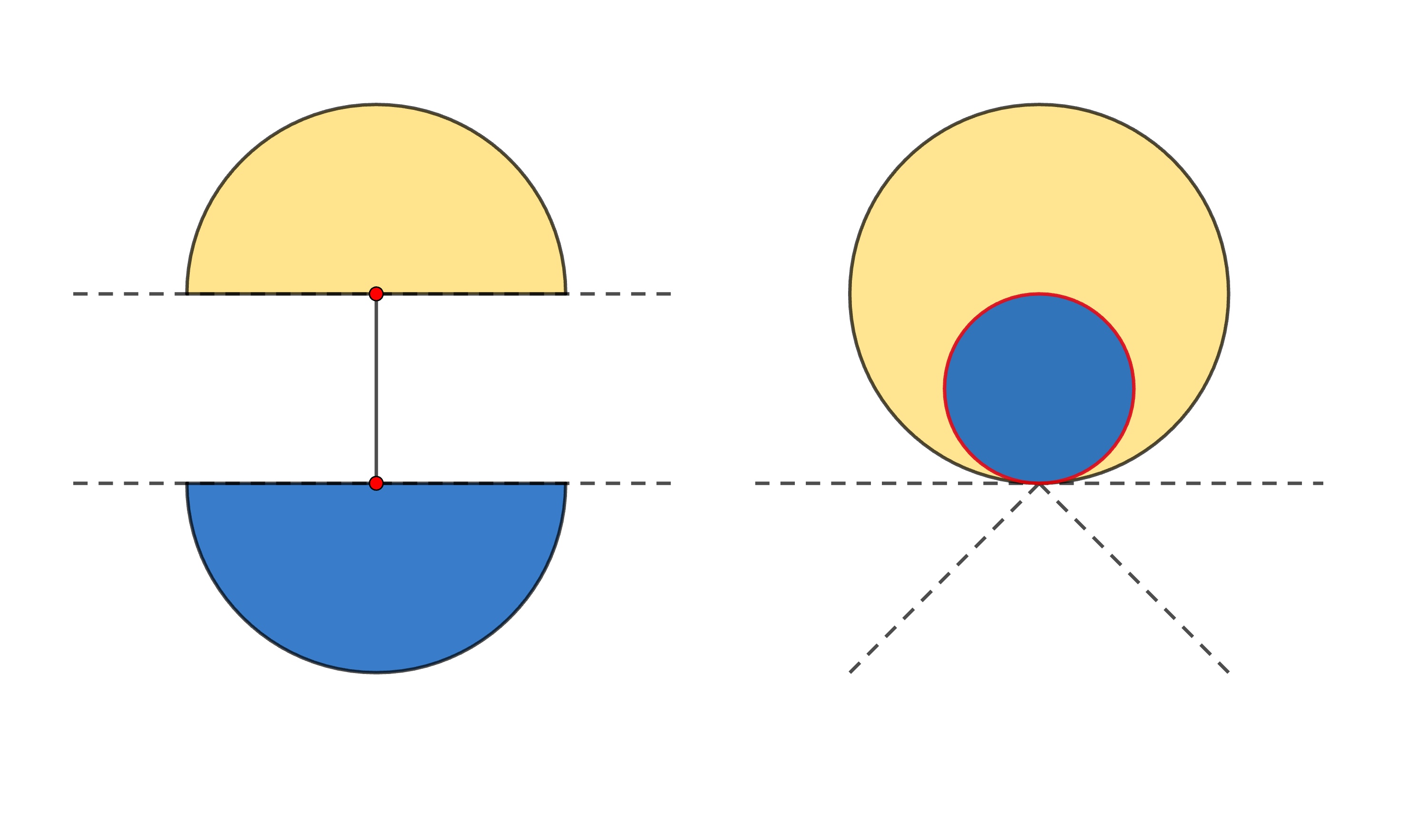}
\put (26,17) {$a$}
\put (40,15) {$b$}
\put (33,31) {$d$}
\put (26,45) {$e$}
\put (40,47) {$f$}
\put (26,22) {$\vec 0$}
\put (26,40) {$\vec 0'$}
\put (72,31) {$a$}
\put (72,16) {$b$}
\put (83,20) {$d$}
\put (72,43) {$e$}
\put (83,50) {$f$}
\put (77,37) {$\Gamma$}
\end{overpic}

\vspace*{-1cm}
\caption{Reference (left) and deformed (right) configurations ($2D$ sections) of the map $\vec v$ by Conti and De Lellis}
\label{fig:regions}
\end{figure}

It was shown in \cite[Theorem 1.2]{BaHEMoRo_24PUB} that  the dipole map $\vec v$ can be approximated by a sequence of maps $\vec u_\e\in\mathcal{A}^r$ from the energy point
of view, i.e., $\vec u_\e \rightharpoonup \vec v$ in $H^1$ and $\lim_{\e\rightarrow 0} E(\vec u_\e) = F(\vec v)$.
Roughly speaking, the maps $\vec u_\e$ are obtained from $\vec v$ by regularizing the jump of the inverse through a smooth junction (see the red layer in Figure~\ref{fig:regions-u_eps}). For the details we refer the reader to Appendix~\ref{app:dipole approx}.

\begin{figure}
\centering
\subfigure{
 \begin{overpic}[width=1.05\linewidth]{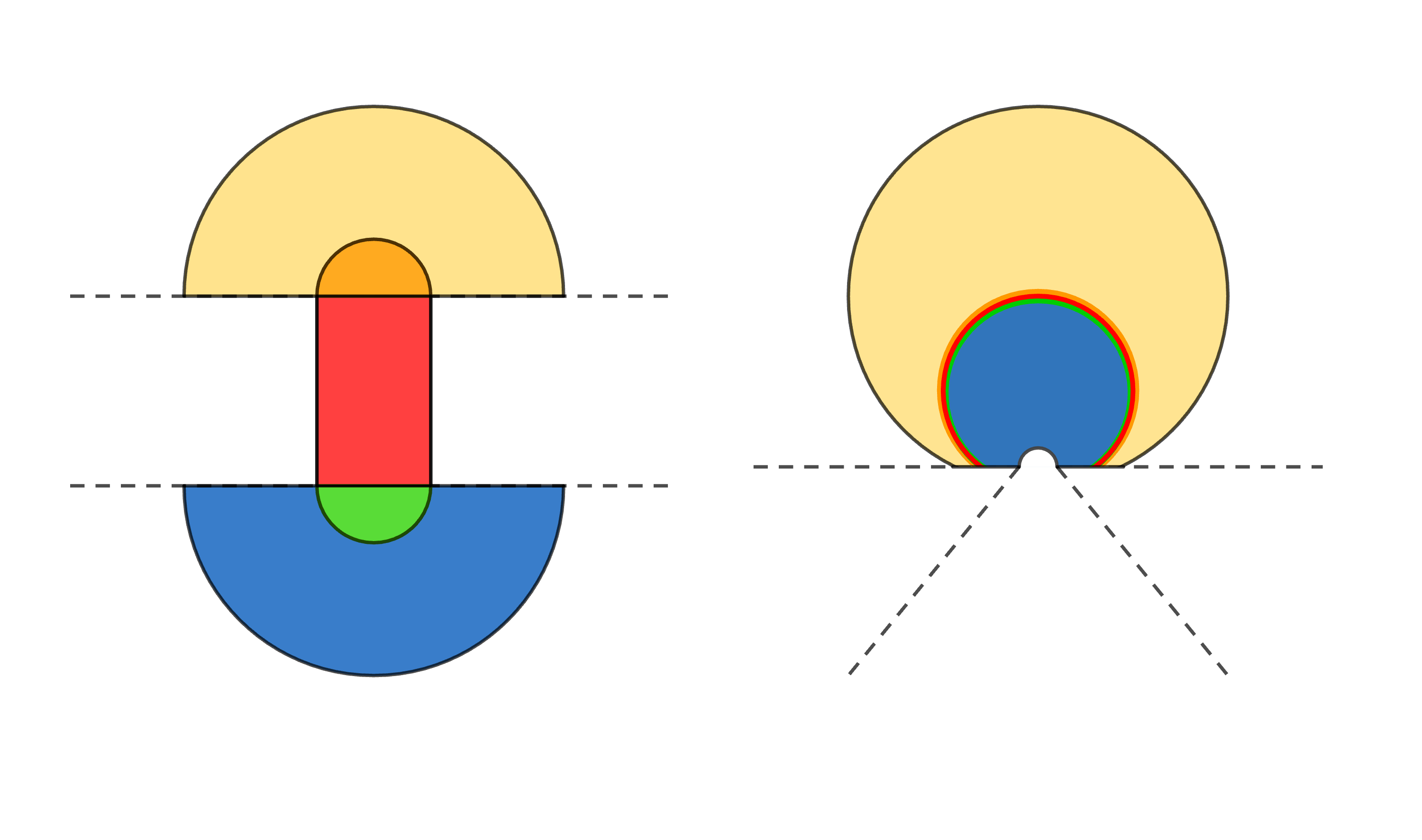}
\put (26,17) {$a_\e$}
\put (40,15) {$b$}
\put (33,31) {$d_\e$}
\put (26,45) {$e_\e$}
\put (40,47) {$f$}
\put (25,22) {$a_\e'$}
\put (25,40) {$e_\e'$}
\put (25,31) {$c_\e'$}
\put (72,31) {$a_\e$}
\put (72,16) {$b$}
\put (83,20) {$d_\e$}
\put (72,43) {$e_\e$}
\put (83,50) {$f$}
\end{overpic}}

\vspace*{-1cm}
\caption{Reference (left) and deformed (right) configurations ($2D$ sections) of the approximating map $\vec u_\e$}
\label{fig:regions-u_eps}
\end{figure}

\begin{figure}
\centering
\includegraphics[width=0.9\linewidth]{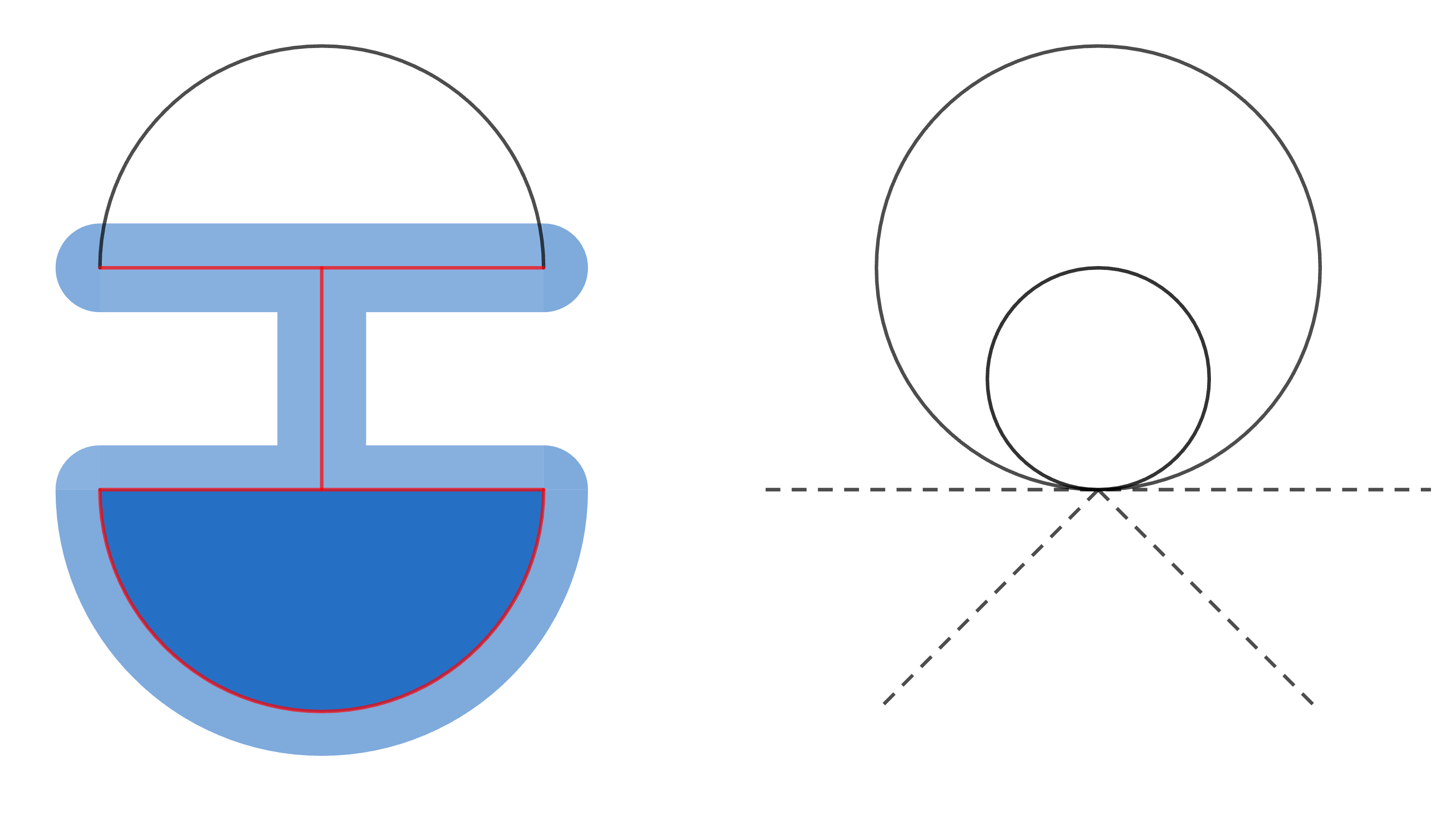}

\vspace*{-1cm}
\caption{In blue the $2D$ section of the region $U_\delta$}
\label{fig:regions-delta}
\end{figure}

The rough idea of the proof of Theorem \ref{th:main1} is to use \(\vec v\) as a boundary data to force approximate minimisers of the energy \(E\) to have the same singularities as \(\vec v\). However, the map $\vec v$ is not one-to-one: the segment $S:=\{(0,0,x_3) \colon x_3\in(0,1)\}$,
the punctured disks $D:=\{(x_1,x_2,0) \colon 0<x_1^2+x_2^2\leq1\}$ and $D':=\{(x_1,x_2,1) \colon 0<x_1^2+x_2^2\leq1\}$,
and the semisphere $\{\vec x:\, x_1^2 +x_2^2 +x_3^2 =1,\, x_3 < 0\}$
are contracted onto the origin. Nevertheless, outside the region 
\begin{align}\label{eq:U}
	U:=\{\vec x:\, x_1^2 +x_2^2 +x_3^2 \leq1,\, x_3 \leq0\}\cup S \cup D'
\end{align}
it is regular and it can be ``patched''. This means that we can construct a bi-Lipschitz homeomorphism from \(B (\vec 0,4)\) that agrees with \(\vec v\) outside a small neighbourhood of \(U\). This construction is the main ingredient in the proof of Theorem \ref{th:main1}.

\section{Proof of Theorem \ref{th:main1}}\label{sec:Lavrentiev}

We now proceed to the proof of Theorem \ref{th:main1} and start by a result containing the patching process alluded to in the previous section, where $\vec v$ is modified in the neighbourhood
\[
 U_\delta:=\{\vec x\in B(\vec 0,4)\colon \dist(\vec x, U)<\delta\} , \qquad \delta \in (0,1]
\]
of $U$ (see Figure ~\ref{fig:regions-delta}, in blue).

\begin{theorem}\label{theo: approx}
For each $\delta\in(0,1]$  there exists a bi-Lipschitz orientation-preserving homeomorphism $\vec b_\delta:B(\vec 0,4)\rightarrow \R^3$
such that $\vec b_\delta=\vec v$ on $B(\vec 0,4)\setminus U_\delta$. 
Moreover, if \(H\) is convex and satisfies \eqref{eq:explosiveH} and \eqref{eq:growth condition}
 for some $\alpha<\tfrac{1}{3}$, $\beta < \tfrac{3}{2}$, and $c>0$,
then
$\{E(\vec b_\delta)\}_{\delta>0}$ is equibounded.
\end{theorem}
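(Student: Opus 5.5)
Since $\vec v$ is axisymmetric, I will build $\vec b_\delta$ axisymmetric as well. In cylindrical coordinates $(r,\theta,x_3)$ it acts on the meridian half-plane $\{r\ge 0\}$, the angle is preserved, and $|D\vec b_\delta|^2$ and $\det D\vec b_\delta$ are computed from the planar meridian map $\vec m=(m_r,m_3)$ through
\[
|D\vec b|^2=|D\vec m|^2+\frac{m_r^2}{r^2},\qquad \det D\vec b=\frac{m_r}{r}\det D\vec m .
\]
The construction has three steps.

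\textbf{Step 1.} Recall from Appendix~\ref{app:dipole full} that, away from $U$, the map $\vec v$ is locally bi-Lipschitz and injective. Its image of $B(\vec 0,4)\setminus U_\delta$ is a set $V_\delta$ whose complement in $\vec v(B(\vec 0,4))$ is a thin region. This region sits near the half-ball image $a$, the lower part of the bubble $\Gamma$ (the image of $D$), the origin (where $S$, $D$ and the hemisphere collapse), and the upper cap of $\Gamma$ together with $\vec 0'$ (the image of $D'$). I will check that $\vec v(\partial U_\delta)$ is a Lipschitz surface, and that it bounds a topological ball $W_\delta$ disjoint from $V_\delta$.

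\textbf{Step 2.} I need a bi-Lipschitz orientation-preserving homeomorphism from $U_\delta$ onto $W_\delta$ that equals $\vec v$ on $\partial U_\delta$. Both sets are axisymmetric topological balls, so it suffices to construct a bi-Lipschitz map between their meridian sections, which are planar Jordan domains, and to require that it maps the axis to the axis. The regularised maps $\vec u_\e$ of Appendix~\ref{app:dipole approx} already do this kind of rerouting: material from $a$ passes through a thin tube around $S$ to fill the cavity at $\vec 0'$. I will therefore take $\vec u_\e$ with $\e\approx\delta$ inside $U_{\delta/2}$. In the collar $U_\delta\setminus U_{\delta/2}$ I will interpolate linearly in the meridian plane between $\vec u_\e$ and $\vec v$, using a cutoff with gradient of order $1/\delta$.

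\textbf{Step 3.} For the energy bound, split
\[
E(\vec b_\delta)=E(\vec v;\,B(\vec 0,4)\setminus U_\delta)+E(\vec b_\delta;\,U_\delta).
\]
The first term is bounded by $E(\vec v)<\infty$.

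On $U_\delta$ I will use the local scalings of $\vec v$ near the singular sets. Near $S$ and $D'$, the map $\vec v$ compresses distances of order $\delta$ into image regions of size comparable to the collapsed sets. Hence $|D\vec b_\delta|\lesssim \delta^{-1}$ only on a set of volume $O(\delta^2)$ around the axis segment $S$, which contributes $O(1)$ to the Dirichlet term. I will also require a tangential-stretching bound of the form $m_r/r\lesssim\delta^{-1/2}$ there, so that the same region contributes $O(1)$ as well.

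The Jacobian is the delicate part. Near the collapsed hemisphere and the disk $D$, the Jacobian may be as small as $\delta^{3}$ on sets of volume $\delta$. Near the cavity point, it may be as large as $\delta^{-2}$ on sets of volume $\delta^{3}$. The growth condition \eqref{eq:growth condition} with $\alpha<1/3$ and $\beta<3/2$ is exactly what makes $\int_{U_\delta} H(\det D\vec b_\delta)$ bounded in each regime:
\[
\delta\cdot\delta^{-3\alpha}\lesssim 1,\qquad \delta^{3}\cdot\delta^{-2\beta}\lesssim 1 .
\]
My plan is to design the meridian interpolation so that these are the worst Jacobian scalings, and then to verify these two integrals.

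\textbf{Main obstacle.} I expect the hardest part to be Step 2 near the origin and near $\vec 0'$, where several collapsed sets meet. There one must make the interpolation injective and orientation-preserving while keeping the Jacobian two-sided controlled at the scales above. Linear interpolation can fail to be injective, so I would first try an explicit piecewise-affine or polar-coordinate construction in the meridian plane. I would then check positivity of $\det D\vec m$ and of $m_r/r$ directly on each piece.
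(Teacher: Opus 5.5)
\textbf{Verdict: there is a genuine gap.} Your architecture is the one the paper uses: $\vec u_\e$ on $U_{\delta/2}$, $\vec v$ outside $U_\delta$, and a linear interpolation with a cut-off of slope $2/\delta$ in the collar. The problem is the coupling $\e\approx\delta$, which makes the construction fail. The maps $\vec u_\e$ are not modifications of $\vec v$ localised at scale $\e$. By the formulas in Appendix~\ref{app:dipole approx}, they differ from $\vec v$ by displacements of size $\e^\gamma$, with $\gamma\le\tfrac13$, throughout regions $b$, $d$, $e$, $f$. Examples are the shift $\sqrt2\,\e^\gamma\vec d+\e^\gamma\vec e_3$ in $b$, the extrusion $\omega_\e\in[\e^\gamma,6\e^\gamma]$ in $d$, the bound $|u_\rho-v_\rho|\le 7\e^\gamma$ in $e$, and the radial shift $6\e^\gamma$ in $f$. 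With $\e\approx\delta$, the term $(\vec u_\e-\vec v)\otimes\nabla\chi_\delta$ in the collar has size $\e^\gamma/\delta\approx\delta^{\gamma-1}\to\infty$, so it dominates $D\vec v$. This breaks each region in turn:
\begin{itemize}
\item In region $f$ one has $b_\rho=v_\rho+6\e^\gamma\chi_\delta$, hence $\p_\rho b_\rho=1-\tfrac{12\e^\gamma}{\delta}\,\p_\rho\dist(\vec x,U)$. Just outside the rim of $D'$ one has $\p_\rho\dist(\vec x,U)\approx 1$, so $\det D\vec b_\delta<0$ there.
\item In region $d$ the inner collar boundary is pushed horizontally by $\omega_\e\ge\e^\gamma\gg\delta$. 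It passes $\vec v(\{\dist(\vec x,U)=\delta\})$ and enters $\vec v(B(\vec 0,4)\setminus U_\delta)$, so injectivity is lost.
\item In region $b$ the collar alone carries Dirichlet energy of order $\e^{2\gamma}/\delta\approx\delta^{2\gamma-1}\to\infty$.
\end{itemize}

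The missing idea is the scale separation $\delta\ge c_0\e^\gamma$ with $c_0$ large ($c_0>14$ suffices in the paper), i.e.\ $\e\lesssim\delta^{1/\gamma}\le\delta^3$. With it, the plain linear interpolation $\vec b_\delta=(1-\chi_\delta)\vec v+\chi_\delta\vec u_\e$ preserves the angular variables when written in coordinates adapted to each region: spherical coordinates in $b$, $e$, $f$, and the intermediate variables $(s,z)$ of $\vec g$ in $d$. Injectivity then reduces to monotonicity of one radial profile. The Jacobian can be computed explicitly, giving $\det D\vec b_\delta\ge c\,\det D\vec v$. The ``main obstacle'' you anticipate near $\vec 0$ and $\vec 0'$ never arises. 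Those points lie in $U_{\delta/2}$, where $\vec b_\delta=\vec u_\e$ is already a bi-Lipschitz homeomorphism, and on the collar $\vec v$ is regular.

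For the energy, the scaling heuristics of your Step~3 are not needed on all of $U_\delta$, and as written they are not a proof. They also misplace the cubic degeneracy: near the hemisphere and $D$ one has $\det D\vec v\sim\dist(\vec x,U)^2$, while the $\cos^3\varphi$ degeneracy occurs in region $e$ above $D'$. On $U_{\delta/2}$ the energy is at most $\sup_\e E(\vec u_\e)<\infty$, by property (iii) in Appendix~\ref{app:dipole approx}, whatever the coupling. Outside $U_\delta$ it is at most $E(\vec v)<\infty$. Only the collar needs a new estimate:
\begin{itemize}
\item The Dirichlet term: the interpolation formula gives $|D\vec b_\delta|\le|D\vec v|+|D\vec u_\e|+c\,\e^\gamma/\delta$. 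This term is bounded as soon as $\e^\gamma\lesssim\delta$.
\item The Jacobian term: here the explicit region-by-region computation is needed. It gives $\det D\vec b_\delta\ge c\,\det D\vec v$, and an upper bound of the same type is also needed. Then $\int_{T_\delta}H(\det D\vec b_\delta)$ is controlled by the integrals of $(\det D\vec v)^{-\alpha}$ and $(\det D\vec v)^{\beta}$ from Appendix~\ref{app:dipole full}. That is exactly where $\alpha<\tfrac13$ and $\beta<\tfrac32$ are used.
\end{itemize}
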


Because the proof is technical,
we defer it to Appendix \ref{app:proof theo approx}. 

In what follows, when $\Om=B(\vec 0,4)$, $\widetilde{\Om}=U_\delta$ and $\vec b=\vec b_\delta$, we rename the family $\mathcal{A}^r=\mathcal{A}^r(\Omega, \widetilde \Omega, \vec b)$ as $\mathcal{A}^r_\delta$, where $\vec b_\delta$ is as in Theorem \ref{theo: approx}.
Moreover, let \(\overline{\mathcal{A}^r_\delta}\) denote the weak sequential closure of  \(\mathcal{A}^r_\delta\) in \(H^1\).
Because of the construction of the $\vec b_\delta$ in Theorem \ref{theo: approx}, we have that
\begin{equation}\label{eq:inclusionArd}
 \mathcal{A}^r_{\delta'} \subset \mathcal{A}^r_\delta \qquad \text{for } 0 < \delta' \leq \delta \leq 1 .
\end{equation}

In our example, $\mathcal{A}^r_\delta$ plays the role of the regular class and the underlying mechanism of the Lavrentiev phenomenon 
is the energy concentration of the dipole $\vec v$ at the two singular points, $\vec 0$ and $\vec 0'$. This amount of energy is precisely 
$2\pi=2\| D^s \vec v^{-1} \|$ and cannot be detected by $E$.

We present the main result of the article, of which Theorem \ref{th:main1} is a particular case.

\begin{theorem}\label{th:main_quantitative}
Assume that $H$ is a convex function that satisfies conditions \eqref{eq:explosiveH} and \eqref{eq:growth condition}. Then, for all \(\lambda \in (0,2\pi)\) there exists $\delta\in(0,1]$ such that 
\begin{equation*}
\inf\{E(\vec u) \colon \vec u \in \mathcal{A}^r_\delta\}\geq\inf\{E(\vec u) \colon \vec u \in \overline{\mathcal{A}^r_\delta}\}+\lambda.
\end{equation*}
\end{theorem}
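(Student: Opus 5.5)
We argue by contradiction, using the lower semicontinuity of the relaxed energy $F$ (Theorem~\ref{main theorem}) together with a direct competitor built from $\vec v$. The key observation is that $\vec v$ itself belongs to $\overline{\mathcal{A}^r_\delta}$ for every $\delta$. Indeed, the approximating maps $\vec u_\e\in\mathcal{A}^r$ of \cite[Theorem 1.2]{BaHEMoRo_24PUB} differ from $\vec v$ only in a thin region around $S$, $D$, $D'$ and the half-ball $a$. So we first check (or arrange, by choosing $\e$ small relative to $\delta$) that $\vec u_\e=\vec v=\vec b_\delta$ on $B(\vec 0,4)\setminus U_\delta$, which gives $\vec u_\e\in\mathcal{A}^r_\delta$. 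Since $\vec u_\e\rightharpoonup\vec v$ in $H^1$, this yields
\[
\inf_{\overline{\mathcal{A}^r_\delta}}E\le E(\vec v).
\]
It therefore suffices to show that $\inf_{\mathcal{A}^r_\delta}E\ge E(\vec v)+\lambda$ for $\delta$ small.

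Suppose instead that for some $\lambda<2\pi$ this fails for every $\delta$. Then we can pick $\delta_j\to0$ and $\vec w_j\in\mathcal{A}^r_{\delta_j}$ with $E(\vec w_j)\le E(\vec v)+\lambda$. Because $\vec w_j\in\mathcal{A}^r$, the inverse is Sobolev, so $D^s\vec w_j^{-1}=0$ and $F(\vec w_j)=E(\vec w_j)$. By \eqref{eq:inclusionArd}, all $\vec w_j$ lie in $\mathcal{A}^r_1\subset\mathcal{B}(B(\vec 0,4),U_1,\vec b_1)$, so Theorem~\ref{main theorem} applies. Along a subsequence, $\vec w_j\rightharpoonup\vec w\in\mathcal{B}$ and
\[
F(\vec w)\le\liminf_j F(\vec w_j)\le E(\vec v)+\lambda.
\]
Since $\vec w_j=\vec v$ outside $U_{\delta_j}$ and $|U_{\delta_j}\setminus U|\to0$ with $|U|>0$ only on the half-ball, we get $\vec w=\vec v$ a.e.\ outside $U$. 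Moreover $U$ is, up to a null set, the closed half-ball $\overline a$.

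The main step is to show that $\vec w$ must reproduce the dipole, namely that $F(\vec w)\ge E(\vec v)+2\pi$. This contradicts $\lambda<2\pi$. Two facts are needed.

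First, outside $a$ we have $\vec w=\vec v$, so the energy of $\vec w$ there equals that of $\vec v$. The remaining task is to control $E(\vec w,a)$ together with the singular part of the inverse.

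Second, and this is the crux, the constraint $\imG(\vec w,\Om)=\Om_{\vec b}$ a.e.\ together with injectivity forces $\vec w(a)$ to fill exactly $\vec v(a)$, the region enclosed by $\Gamma$ and the lower hemisphere. This region is separated from the rest of the image along $\Gamma$, which comes from the two points $\vec 0,\vec 0'$ where $\vec v$ is fixed by the boundary data. The inverse $\vec w^{-1}$ then jumps across $\Gamma$: on one side it takes values in $a$ near $\vec 0$, and on the other side values near $\vec 0'$, since outside $a$ the inverse coincides with $\vec v^{-1}$. A slicing argument along the vertical lines through $\Gamma$, as in the computation $\|D^s\vec v^{-1}\|=\pi$ in \cite{BaHEMoRo_24PUB}, should give $\|D^s\vec w^{-1}\|\ge\pi$. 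Here the jump amplitude is at least $|\vec 0'-\vec 0|=1$ minus the $L^1$-small deviation coming from traces. Getting this lower bound uniformly requires care about the traces of $\vec w^{-1}$ on $\Gamma$, and this is where I expect the main difficulty.

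For the bulk term on $a$, the plan is to use an inequality of polyconvex type, $|D\vec w|^2\ge 3(\det D\vec w)^{2/3}$ or similar, and to exploit that $\vec v|_a$ minimises among maps onto $\vec v(a)$. Alternatively, and more simply, we can avoid comparing $E(\vec w,a)$ with $E(\vec v,a)$ at all. Instead we replace the competitor: the candidate for the infimum over $\overline{\mathcal{A}^r_\delta}$ is taken as a minimiser of $F$ over the set of $\mathcal{B}$-maps equal to $\vec v$ outside $a$, together with the corresponding $\vec u_\e$-type regularisation. The gap of $2\pi-\lambda$ then comes purely from the singular term. This reformulation is what I would pursue if the direct comparison on $a$ proves awkward.
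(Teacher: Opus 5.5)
There is a genuine gap, and your reduction to $E(\vec v)$ creates it. Bounding $\inf_{\overline{\mathcal{A}^r_\delta}}E$ by $E(\vec v)$ commits you to the stronger claim $\inf_{\mathcal{A}^r_\delta}E\ge E(\vec v)+\lambda$. Your limit argument then needs $F(\vec w)\ge E(\vec v)+2\pi$. The jump of $\vec w^{-1}$ across $\Gamma$ only gives $F(\vec w)\ge E(\vec w)+2\pi$, so you would also need $E(\vec w,a)\ge E(\vec v,a)$ for an arbitrary weak limit $\vec w$ agreeing with $\vec v$ outside $a$. Nothing supports this. The map $\vec v|_a$ is the explicit Conti--De Lellis formula ($v_\rho=(1-\rho)\cos v_\varphi$, $v_\varphi=\pi-\varphi$) and has no minimality property. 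A pointwise bound such as $|D\vec w|^2\ge 3(\det D\vec w)^{2/3}$ cannot produce a comparison with a non-optimal reference map. So the statement you reduced to is not implied by the available tools and may well fail for $\lambda$ close to $2\pi$. Your fallback only moves the problem. To use a minimiser $\vec v^*$ of $F$ among $\mathcal B$-maps equal to $\vec v$ outside $a$ as a competitor, you need $\vec v^*\in\overline{\mathcal{A}^r_\delta}$, i.e.\ a recovery sequence in $\mathcal{A}^r_\delta$ for an unknown map. The $\vec u_\e$ construction relies on the explicit structure of $\vec v$ and does not provide one.

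The missing observation is that the limit $\vec w$ is itself a competitor. For fixed $\delta$ and $j$ large, \eqref{eq:inclusionArd} gives $\vec w_j\in\mathcal{A}^r_{\delta_j}\subset\mathcal{A}^r_\delta$, hence $\vec w\in\overline{\mathcal{A}^r_\delta}$ for every $\delta$. So negate the theorem as stated. Pick $\vec w_j\in\mathcal{A}^r_{\delta_j}$ with $E(\vec w_j)<\inf_{\overline{\mathcal{A}^r_{\delta_j}}}E+\lambda$ and $E(\vec w_j)\le E(\vec b_{\delta_j})$; these energies are equibounded by Theorem~\ref{theo: approx}. Then
\[
E(\vec w)+2\pi\le F(\vec w)\le\liminf_{j\to\infty}E(\vec w_j)\le\liminf_{j\to\infty}\inf_{\overline{\mathcal{A}^r_{\delta_j}}}E+\lambda\le E(\vec w)+\lambda ,
\]
which contradicts $\lambda<2\pi$. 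The bulk energy on $a$ appears on both sides and cancels, so no comparison with $\vec v$ is ever needed; this is the paper's argument.

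Separately, your first step is wrong as justified: $\vec u_\e$ does not coincide with $\vec v$ outside $U_\delta$. It carries $\e^\gamma$-size shifts throughout regions $b$, $d$, $f$. For example, $\vec u_\e=\vec v+\sqrt2\,\e^\gamma\vec d+\e^\gamma\vec e_3$ wherever $\vec\psi=\mathbf{id}$ in $b$, and $u^\e_\rho=v_\rho+6\e^\gamma$ in $f$. This is exactly why the patch of Theorem~\ref{theo: approx} is needed. The conclusion $\vec v\in\overline{\mathcal{A}^r_\delta}$ can still be recovered from $\vec b_{\delta'}\rightharpoonup\vec v$ as $\delta'\to0$, but once you use $\vec w$ as the competitor it is unnecessary.
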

\begin{proof}
Suppose, for the sake of contradiction, that the claim is false for some $\lambda \in (0, 2 \pi)$. Then, for any $\delta \in (0,1]$ we can select $\vec w_\delta\in\mathcal{A}^r_\delta$ such that
\begin{equation}\label{ineq1}
E(\vec w_\delta)<\inf\{E(\vec u) \colon \vec u \in \overline{\mathcal{A}^r_\delta}\}+\lambda.
\end{equation}
As $\vec b_\delta \in \mathcal{A}^r_\delta$, we can choose $\vec w_{\delta}$ such that $E(\vec w_\delta)\leq E(\vec b_\delta)$.
Then, by Theorem \ref{theo: approx}, the set $\{E(\vec w_\delta)\}_{\delta \in (0,1]}$ 
is bounded.
From \eqref{eq:inclusionArd} we obtain that $\vec w_\delta \in \mathcal{A}^r_1$.
Then, by Theorem \ref{main theorem},
there exist a sequence $\{ \delta_j \}_j$ in $(0,1]$ tending to zero and a map $\vec w\in\overline{\mathcal{A}^r_1}$ such that 
\begin{equation}\label{ineq2}
 \vec w_{\d_j} \rightharpoonup \vec w \text{ in } H^1(\Om,\R^3) , \qquad \vec w_{\d_j} \to \vec w \text{ a.e. \quad and \quad } \liminf_{j\to\infty} E(\vec w_{\d_j})\geq F(\vec w).
\end{equation} 
By \cite[Proposition 3.1]{BaHEMoRo_24_SIAM}, the inverse map $\vec w^{-1}$ belongs to $BV$.
Let us see that it fails to be in a Sobolev space by estimating $\| D^s \vec w^{-1} \|$ from below. Indeed, by \eqref{ineq2} and the boundary condition $\vec w_{\d_j} = \vec v$ in $\Om \setminus U_{\d_j}$, we have that $\vec w$ coincides with $\vec v$ a.e.\ outside $U$, and, hence, outside the half-ball $a$.
Therefore, $\vec w^{-1}$
coincides with $\vec v^{-1}$ a.e.\ outside the ball $B((0,0,1/2),1/2) = \vec v (a)$. In particular, \(\vec w^{-1}\) sends \( \vec v (e)\) to \( e\), so $\vec w^{-1}$ has also
a discontinuity on $\Gamma$ and the amplitude of the jump is at least $1$ (the distance of the two half-balls 
$a$ and $e$ in the reference configuration). Then $\| D^s \vec w^{-1} \|\geq\pi$ and by \eqref{ineq2}
\begin{equation}\label{ineq3}
\liminf_{j\to\infty} E(\vec w_{\d_j})\geq E(\vec w)+2\pi.
\end{equation} 
Using \eqref{eq:inclusionArd}, we have that $\vec w \in \overline{\mathcal{A}^r_\delta}$ for all $\delta \in (0, 1]$, so
\[
\inf\{E(\vec u) \colon \vec u \in \overline{\mathcal{A}^r_\delta}\}\leq E(\vec w)
\]
and, in particular,
\begin{equation}\label{ineq4}
 \liminf_{j \to \infty} \inf\{E(\vec u) \colon \vec u \in \overline{\mathcal{A}^r_{\delta_j}}\}\leq E(\vec w) .
\end{equation}
From \eqref{ineq1} we obtain
\begin{equation}\label{ineq5}
 \liminf_{j \to \infty} E(\vec w_{\delta_j}) \leq \liminf_{j \to \infty} \inf\{E(\vec u) \colon \vec u \in \overline{\mathcal{A}^r_{\delta_j}}\}+\lambda.
\end{equation}
Combining \eqref{ineq3}, \eqref{ineq4} and \eqref{ineq5} we get a contradiction.
\end{proof}

\section{Variants of Theorem \ref{th:main1}}\label{sec:variants}

In this section we give two variants of our main Theorem \ref{th:main1}, and of its  version in Theorem \ref{th:main_quantitative}, in which the infimum in the larger space is attained.

\subsection{The axisymmetric setting}
We first give a variant of Theorem \ref{th:main1} in the axisymmetric setting. For \(\Omega\) and \(\widetilde \Omega\) two smooth axisymmetric bounded open subsets of \(\R^3\) such that \(\widetilde \Omega  \subset \subset \Omega\) and for \(\vec b: \widetilde \Omega \rightarrow \R^3\) a bi-Lipschitz axisymmetric homeomorphism, we define
\begin{equation}
\mathcal{A}_s =\mathcal{A}_s(\Omega, \widetilde \Omega, \vec b):=\{ \vec u \in \mathcal{A} \colon  \vec u \text{ is axisymmetric} \}
\quad\text{ and }\quad
\mathcal{A}_s^r := \mathcal{A}_s\cap \mathcal{A}^r.
\end{equation}
We denote by \(\overline{\mathcal{A}}_s^r\) the weak sequential closure in \(H^1\) of maps in \(\mathcal{A}_s^r\). For the precise definition of axisymmetric maps we refer to \cite[Section 2.3]{BaHEMoRo_23PUB}.
The variant of Theorems \ref{th:main1} and \ref{th:main_quantitative} in this context is as follows.

\begin{theorem}\label{th:main_axi}
We assume that $H$ is a convex function that satisfies conditions \eqref{eq:explosiveH} and \eqref{eq:growth condition}. Set \(\Omega=B(\vec 0,4)\).
Then for all \(\lambda\in (0,2\pi)\) there exist an axisymmetric domain \(\widetilde \Omega \subset \subset \Omega\) and a bi-Lipschitz axisymmetric homeomorphism \(\vec b : \Omega \rightarrow \R^3\) such that 
\begin{align}
\inf \{ E(\vec u): \vec u \in \mathcal{A}_s^r\} & \geq \min \{ E(\vec u): \vec u \in  \overline{\mathcal{A}}_s^r \} +\lambda \label{RHS1} \\
& \geq \min \{ E(\vec u): \vec u \in \mathcal{A}_s\}+\lambda. \label{RHS2}
\end{align}
\end{theorem}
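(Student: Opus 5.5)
We take $\widetilde\Om=U_\delta$ and $\vec b=\vec b_\delta$, with $\delta$ to be fixed. We need $\vec b_\delta$ to be axisymmetric. The map $\vec v$ and the region $U$ are axisymmetric, so we will check (or arrange) that the patching construction of Appendix~\ref{app:proof theo approx} can be carried out equivariantly, for instance by working in cylindrical coordinates with the angular variable untouched. Then $\vec b_\delta\in\mathcal{A}^r_s$, and the inclusion \eqref{eq:inclusionArd} holds for the axisymmetric classes as well.

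\textbf{Step 1: inequality \eqref{RHS1} with an infimum.} We repeat the contradiction argument of Theorem~\ref{th:main_quantitative} verbatim inside $\mathcal{A}^r_{s,\delta}$. Choose almost minimisers $\vec w_\delta$ with $E(\vec w_\delta)\le E(\vec b_\delta)$; these are bounded by Theorem~\ref{theo: approx}. By Theorem~\ref{main theorem} they have a weak limit $\vec w$. Axisymmetry passes to weak limits, so $\vec w\in\overline{\mathcal{A}}^r_{s,\delta}$ for every $\delta$. The jump of $\vec w^{-1}$ across $\Gamma$ gives $\|D^s\vec w^{-1}\|\ge\pi$, hence $\liminf_j E(\vec w_{\delta_j})\ge E(\vec w)+2\pi$, and we reach a contradiction exactly as before.

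\textbf{Step 2: attainment of the minimum on $\overline{\mathcal{A}}^r_s$.} This is the genuinely new point. Take a minimising sequence $\vec u_n\in\overline{\mathcal{A}}^r_s$. By a diagonal argument we may replace it with a sequence in $\mathcal{A}^r_s$ that converges weakly to the same limits with energies close to $E(\vec u_n)$; this needs some care, since $E$ is not continuous under weak convergence. The route we would try first is to invoke the compactness of Jacobians available in the axisymmetric setting from \cite{BaHEMoRo_23PUB}. There, for axisymmetric maps with equibounded $F$-energy (or $E$-energy in $\mathcal{A}^r_s$), the weak limit satisfies $\det D\vec u_n\rightharpoonup\det D\vec u$ in $L^1$ and the singular set is confined to the symmetry axis. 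With this, $E$ is weakly lower semicontinuous along the sequence, by polyconvexity together with the convexity of $H$. Since the closure $\overline{\mathcal{A}}^r_s$ is sequentially closed under bounded-energy weak limits, again by a diagonal argument, the direct method yields a minimiser. We expect this step to be the main obstacle. It requires the axisymmetric $L^1$ convergence of determinants, and we must make sure that the weak closure is stable under the diagonal extraction; if it is not, we would redefine the closure as the set of limits of bounded-energy sequences.

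\textbf{Step 3: inequality \eqref{RHS2}.} It suffices to show $\overline{\mathcal{A}}^r_s\subset\mathcal{A}_s$ and that $E$ attains its minimum on $\mathcal{A}_s$. For the inclusion, limits of bounded-energy sequences in $\mathcal{A}^r_s$ satisfy the following:
\begin{itemize}
\item they remain one-to-one a.e., by Theorem~\ref{main theorem}, since they lie in $\mathcal{B}$;
\item they have $\det D\vec u>0$ a.e., by the same $L^1$ Jacobian convergence together with $H(0^+)=\infty$;
\item they have finite energy and stay axisymmetric.
\end{itemize}
For the existence of a minimum on $\mathcal{A}_s$, we need a compactness-plus-lower-semicontinuity result for axisymmetric a.e.-injective maps. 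We would try to derive it from \cite{BaHEMoRo_23PUB}, and failing that replace $\min$ by $\inf$ in \eqref{RHS2}. The inequality itself then follows because a minimum over a larger set is smaller.
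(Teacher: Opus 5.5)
Your choice $\widetilde\Om=U_\delta$, $\vec b=\vec b_\delta$ and your Step 1 match the paper's proof. There, the argument of Theorem \ref{th:main_quantitative} is rerun inside the axisymmetric class. The only new observation is that $\vec v$, $\vec u_\e$, and hence the patch $\vec b_\delta$ of Appendix \ref{app:proof theo approx}, are axisymmetric, since every formula there acts on $(\rho,\varphi)$ or $(r,x_3)$ and leaves $\theta$ untouched. The paper does not prove the attainment of the two minima or the inclusion $\overline{\mathcal{A}}_s^r\subset\mathcal{A}_s$. It imports them from \cite{BaHEMoRo_23PUB}: Proposition 6.6 and Theorem 3.2 there for the minima, Theorem 5.1 for the inclusion.

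Your Step 2, where you try to supply this yourself, rests on a claim that fails. A minimising sequence $\vec u_n\in\overline{\mathcal{A}}_s^r$ cannot in general be replaced by maps of $\mathcal{A}_s^r$ with energies close to $E(\vec u_n)$. Indeed, suppose $\vec u^k\in\mathcal{A}_s^r$ and $\vec u^k\rightharpoonup\vec u_n$ with bounded energy. Then $F(\vec u^k)=E(\vec u^k)$, because the inverses are $W^{1,1}$, and Theorem \ref{main theorem} gives $\liminf_k E(\vec u^k)\ge E(\vec u_n)+2\|D^s\vec u_n^{-1}\|$. Worse, if energy-close replacements existed for every $n$, you would get $\inf_{\mathcal{A}_s^r}E\le\inf_{\overline{\mathcal{A}}_s^r}E$. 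That contradicts the inequality $\inf_{\mathcal{A}_s^r}E\ge\inf_{\overline{\mathcal{A}}_s^r}E+\lambda$ that your own Step 1 proves.

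So the direct method must be run on $\overline{\mathcal{A}}_s^r$ itself. What you need are two statements about bounded-energy sequences in the closure, not in $\mathcal{A}_s^r$:
\begin{itemize}
\item weak $L^1$ convergence of their Jacobians, so that $\int H(\det D\,\cdot)$ is lower semicontinuous;
\item stability of $\overline{\mathcal{A}}_s^r$ under their weak limits.
\end{itemize}
The second does not follow from a diagonal argument either. The approximating sequences for different $\vec u_n$ need not be uniformly bounded in $H^1$, and a weak sequential closure need not be weakly sequentially closed.

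Likewise, your fallbacks in Steps 2 and 3 (replacing $\min$ by $\inf$ in \eqref{RHS2}, or redefining the closure as limits of bounded-energy sequences) would prove a weaker or different statement from the one claimed. Once the cited results of \cite{BaHEMoRo_23PUB} are used for the existence of both minima and for $\overline{\mathcal{A}}_s^r\subset\mathcal{A}_s$, \eqref{RHS1} follows from your Step 1. Then \eqref{RHS2} is immediate, since a minimum over a larger set is smaller, exactly as in the paper.
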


\begin{proof}
The proof is exactly the same as the one of Theorem \ref{th:main_quantitative}, with the same choices of \(\widetilde \Omega=U_\delta\) and \(\vec b=\vec b_\delta\). The key point is that the Conti--De Lellis map is axisymmetric and so is its patch obtained in Theorem \ref{theo: approx}.
\end{proof}
We note that the minima on the right-hand side of \eqref{RHS1} and \eqref{RHS2}  are attained thanks to \cite[Proposition 6.6 and Theorem 3.2]{BaHEMoRo_23PUB}. We also observe that \(\overline{\mathcal{A}}_s^r  \subset \mathcal{A}_s\) from \cite[Theorem 5.1]{BaHEMoRo_23PUB}.

\subsection{Working with the weak closure of Sobolev homeomorphisms}

In another direction, in a general non-axisymmetric setting, instead of working with \(\mathcal{A}^r\) and its weak closure we can work with the class of Sobolev homeomorphisms and its weak closure.  For \(\Omega\) and \(\widetilde \Omega\) two smooth bounded open subsets of \(\R^3\) such that \(\widetilde \Omega  \subset \subset \Omega\) and for \(\vec b: \widetilde \Omega \rightarrow \R^3\) a bi-Lipschitz  homeomorphism we define
\begin{equation*}\begin{split}
\mathcal{H}=\mathcal{H}(\Omega,\widetilde \Omega, \vec b) :=\{ \vec u: \Omega \rightarrow \vec b(\Omega) & \text{ homeomorphism satisfying Lusin's condition (N),} \\
& \,\,\,\vec u=\vec b \text{ on } \Omega \setminus \widetilde \Omega \text { and } E(\vec u)\leq E(\vec b) \}.
\end{split}\end{equation*}
Recall that a map \(\vec u:\Omega \rightarrow \R^3\) satisfies Lusin's condition (N) if \(|\vec u(A)|=0\) for any \(A\subset \R^3\) such that \(|A|=0\).
We denote by \(\overline{\mathcal{H}}\) the weak closure in \(H^1\) of maps in \(\mathcal{H}(\Omega,\widetilde \Omega, \vec b)\).
The variant of Theorems \ref{th:main1} and \ref{th:main_quantitative} in this context is as follows.

\begin{theorem}\label{th:main_diffeo}
We assume that $H$ is a convex function that satisfies conditions \eqref{eq:explosiveH} and \eqref{eq:growth condition}. Let \(\Omega=B(\vec 0,4)\).
Then for all \(\lambda \in (0,2\pi)\) there exist a smooth open set \(\widetilde \Omega \subset \subset \Omega\) and a bi-Lipschitz  homeomorphism \(\vec b:\Omega\setminus \widetilde \Omega \rightarrow \R^3\)  such that 
\begin{align}
\inf \{ E(\vec u): \vec u \in \mathcal{H}\} \geq \min \{ E(\vec u): \vec u \in  \overline{\mathcal{H}} \} +\lambda. \label{eq:RHS3}
\end{align}
\end{theorem}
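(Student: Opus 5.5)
The argument follows the contradiction scheme of Theorem \ref{th:main_quantitative}. We take $\widetilde\Om=U_\delta$ and $\vec b=\vec b_\delta$ from Theorem \ref{theo: approx}, restricted to $\Om\setminus U_\delta$. Strictly speaking, $U_\delta$ should be replaced by a smooth open set squeezed between $U_{\delta/2}$ and $U_\delta$; since $\vec b_\delta=\vec v$ outside $U_\delta$, this changes nothing below.

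Two preliminary facts are needed. First, $\mathcal{H}_\delta:=\mathcal{H}(\Om,U_\delta,\vec b_\delta)$ is nonempty, because $\vec b_\delta$ itself is a bi-Lipschitz homeomorphism and therefore satisfies Lusin's condition (N). Second, every $\vec u\in\mathcal H_\delta$ satisfies $E(\vec u)\le E(\vec b_\delta)\le C$ uniformly in $\delta$, by Theorem \ref{theo: approx}; this is why the energy bound is built into the definition of $\mathcal H$.

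The key step is to show that $\mathcal H_\delta\subset\mathcal B$, with $F=E$ on $\mathcal H_\delta$. Let $\vec u\in\mathcal H_\delta$. It is an $H^1$ homeomorphism, and it has $\det D\vec u>0$ a.e.: the sign is fixed by orientation preservation, inherited from agreeing with $\vec b_\delta$ near $\p\Om$, and finiteness of $E$ excludes $\det D\vec u=0$ on a positive-measure set. Together with condition (N), the area formula gives $\imG(\vec u,\Om)=\vec b_\delta(\Om)$ a.e. For the inverse, one uses the standard fact that the inverse of an $H^1$ ($p\ge n-1=2$) homeomorphism with finite distortion lies in $W^{1,1}$, with $\int|D\vec u^{-1}|=\int_\Om|\cof D\vec u|$. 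Hence $\vec u\in\mathcal A^r_\delta$. Alternatively, one could cite \cite[Lemma 2.10]{BaHEMoRo_24_SIAM} and \cite[Lemma 5.1]{BaHeMo17}, as in the introduction. In either case $D^s\vec u^{-1}=0$, so $F(\vec u)=E(\vec u)$. This is the step I expect to need the most care: the regularity of the inverse of Sobolev homeomorphisms, and checking that $\vec u\in H^1$ rather than merely $W^{1,1}$ suffices.

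The minimum on the right-hand side of \eqref{eq:RHS3} is attained by the results of \cite[Theorem 5.3]{Dolezalova_Hencl_Molchanova_2024} and \cite[Theorem 4.3]{Kalayanamit_2025} quoted in the introduction. If one prefers a self-contained argument, take a minimising sequence in $\overline{\mathcal H_\delta}$, which is weakly closed and bounded in $H^1$ by the coercivity of $|D\vec u|^2$. The difficulty is then lower semicontinuity of $E$ along it, in particular weak convergence of the Jacobians; these cited works are exactly what supplies this.

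Now fix $\lambda\in(0,2\pi)$ and suppose that for every $\delta$ there is $\vec w_\delta\in\mathcal H_\delta$ with $E(\vec w_\delta)<\min_{\overline{\mathcal H_\delta}}E+\lambda$. Since $\mathcal H_\delta\subset\mathcal A^r_\delta\subset\mathcal A^r_1$ and the energies are equibounded, Theorem \ref{main theorem} yields $\delta_j\to0$ and $\vec w\in\mathcal B$ with $\vec w_{\delta_j}\rightharpoonup\vec w$ and $\liminf E(\vec w_{\delta_j})\ge F(\vec w)$.

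The jump argument from the proof of Theorem \ref{th:main_quantitative} applies verbatim. We have $\vec w=\vec v$ outside $U$, so $\vec w^{-1}$ jumps across $\Gamma$ with amplitude at least $1$. Hence $\|D^s\vec w^{-1}\|\ge\pi$ and $\liminf E(\vec w_{\delta_j})\ge E(\vec w)+2\pi$.

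It remains to check that $\vec w\in\overline{\mathcal H_\delta}$ for each fixed $\delta$. This uses the inclusion $\mathcal H_{\delta'}\subset\mathcal H_\delta$ for $\delta'\le\delta$, the analogue of \eqref{eq:inclusionArd}. The boundary condition is immediate, since $\vec b_{\delta'}=\vec v=\vec b_\delta$ outside $U_\delta$. The energy condition $E(\vec u)\le E(\vec b_{\delta})$ is the delicate point. To handle it, I would redefine $\mathcal H$ with the constraint $E(\vec u)\le \sup_{\delta'}E(\vec b_{\delta'})$, or arrange $\vec b_\delta$ so that $E(\vec b_\delta)$ is monotone in $\delta$.

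Granting this, $\min_{\overline{\mathcal H_{\delta_j}}}E\le E(\vec w)$. Combining with the contradiction hypothesis gives $E(\vec w)+2\pi\le E(\vec w)+\lambda$, which is impossible since $\lambda<2\pi$.
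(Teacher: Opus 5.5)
Your route is the paper's route. The paper's proof just says that the argument of Theorem~\ref{th:main_quantitative} applies, using two facts: $\mathcal H\subset\mathcal A^r$, which the paper gets from the divergence identities for homeomorphisms with Lusin's condition \cite[Theorem 5.4]{HeMo12}, and $\vec b_\delta\in\mathcal H$. Your $W^{1,1}$-inverse argument is an acceptable substitute for the first.

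However, the step you leave as ``granting this'' is a genuine gap in your write-up, and the paper's one-line proof passes over it too. Because $\mathcal H_\delta$ carries the constraint $E(\vec u)\le E(\vec b_\delta)$, the nesting $\mathcal H_{\delta'}\subset\mathcal H_\delta$ needed for $\vec w\in\overline{\mathcal H_\delta}$ requires $E(\vec b_{\delta'})\le E(\vec b_\delta)$. Theorem~\ref{theo: approx} does not give this. You also cannot extract it along a subsequence, since a bounded sequence may be strictly increasing. Redefining $\mathcal H$ is not allowed, since it changes the statement.

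Your second idea does work, because the values of $\vec b$ inside $\widetilde\Om$ enter only through the number $E(\vec b)$. Fix $M\ge\sup_\delta E(\vec b_\delta)$ and a ball $B\subset\subset U_\delta$ with centre $\vec x_0$. Replace $\vec b_\delta$ by $\vec b_\delta\circ\Phi_t$, where $\Phi_t(\vec x)=\vec x_0+R_{t\eta(\vec x)}(\vec x-\vec x_0)$ is a twist about an axis through $\vec x_0$ and $\eta$ is a smooth radial cut-off supported in $B$. Then:
\begin{itemize}
\item $\Phi_t$ is a volume-preserving diffeomorphism equal to the identity off $B$, so $\vec b_\delta\circ\Phi_t$ is still a bi-Lipschitz homeomorphism equal to $\vec v$ outside $U_\delta$;
\item by change of variables, $\int H(\det D(\vec b_\delta\circ\Phi_t))=\int H(\det D\vec b_\delta)$;
\item $t\mapsto\int|D(\vec b_\delta\circ\Phi_t)|^2$ is continuous and tends to $\infty$, using $|D\vec b_\delta\,\vec\xi|\ge L_\delta^{-1}|\vec\xi|$ and $\int_B|D\Phi_t|^2\to\infty$.
\end{itemize}
By the intermediate value theorem, $E(\vec b_\delta\circ\Phi_t)=M$ for some $t$. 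With this common energy level the classes are nested, all competitors satisfy $E\le M$, and the rest of your argument goes through.

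A related slip: replacing $U_\delta$ by a smooth set squeezed between $U_{\delta/2}$ and $U_\delta$ does not ``change nothing''. On $T_\delta$ the map $\vec b_\delta$ is the interpolation, not $\vec v$; in region $b$, for instance, it is shifted by $\chi_\delta(s)\,\e^\gamma(\sqrt2\,\vec d+\vec e_3)$. So every element of that class, and of its weak closure, would equal $\vec b_\delta\neq\vec v$ on $U_\delta\setminus\widetilde\Om$. Your limit satisfies $\vec w=\vec v$ outside $U$, so $\vec w\notin\overline{\mathcal H_\delta}$ and the final step fails. Smooth outwards instead. Choose a smooth $\widetilde\Om_\delta$ with $U_\delta\subset\widetilde\Om_\delta\subset U_{2\delta}$ and run the argument along $\delta_j=2^{-j}$. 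Then $\widetilde\Om_{\delta_{j+1}}\subset U_{\delta_j}\subset\widetilde\Om_{\delta_j}$, and all boundary data (twisted or not) with index $i\ge j$ equal $\vec v$ off $\widetilde\Om_{\delta_j}$, so the nesting holds.
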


\begin{proof}
The proof is again the same as in Theorem \ref{th:main_quantitative}. The key points are that homeomorphisms satisfying Lusin's condition (N) in turn satisfy the divergence identities (see \cite[Theorem 5.4]{HeMo12}) and, hence, \(\mathcal{H} \subset \mathcal{A}^r\), and that the patch of the Conti--De Lellis map \(\vec u_\delta\) obtained in Theorem \ref{theo: approx} is a bi-Lipschitz homeomorphism (and hence satisfies Lusin's condition). 
\end{proof}
Note that the minimum in the right-hand side of \eqref{eq:RHS3} is attained due to \cite[Theorem 5.3]{Dolezalova_Hencl_Molchanova_2024} (see also \cite[Theorem 4.3]{Kalayanamit_2025}).


\appendix

\section{The dipole map}\label{app:dipole full}

The definition of the limiting map in the Conti--De Lellis example is constructed by partitioning the ball \( B(\vec 0,3)\) into several distinct subregions. 
By axisymmetry, it suffices to describe \(\vec v\) in the right half-plane.
We describe \(\vec v\) by using spherical coordinates \( (v_\rho, v_{\theta} =\theta, v_\varphi)\)
in the image.  Note that, when \( \theta=0\), the vector \(\vec e_r \) equals \( \vec e_1\); as a consequence, \( (v_\rho,v_\varphi)\) are the polar coordinates of the map \( \vec v\) restricted to the plane generated by \( (\vec e_1, \vec e_3)\). 

\smallskip
\noindent\textbf{Region} $a:=\{ \rho \sin \varphi \vec e_r(\theta)+\rho \cos \varphi \vec e_3 
: \ 0\leq \rho \leq 1, \frac{\pi}{2} \leq \varphi \leq \pi \}$. 
In this region we set
\begin{gather*}
    \vec v \big ( \rho \sin\varphi \vec e_r(\theta) +\rho\cos\varphi\vec e_3 \big ) 
    = v_\rho \sin v_\varphi\,\vec e_r(\theta) + v_\rho \cos v_\varphi\, \vec e_3,
    \\
    v_\rho(\rho, \varphi) = (1-\rho)\cos v_\varphi, \qquad
    v_\varphi(\rho,\varphi) = \pi-\varphi.
\end{gather*}

\smallskip
\noindent\textbf{Region} $b:=\{ \rho \sin \varphi \vec e_r(\theta)+\rho \cos \varphi \vec e_3 : 1<\rho\leq 3, \frac{\pi}{2}\leq \varphi \leq \pi\}$. 
Here we define \(\vec v \) by
\begin{gather*}
	\vec v\big (\rho\sin\varphi\,\vec e_r(\theta) + \rho\cos\varphi\,\vec e_3\big )= v_\rho \sin v_\varphi\,\vec e_r(\theta) + v_\rho \cos v_\varphi\, \vec e_3,
    \\
    v_\rho(\rho, \varphi) = \rho-1, \qquad
    v_\varphi(\rho,\varphi) = \frac{\varphi+\pi}{2}. 
\end{gather*}

\smallskip
\noindent\textbf{Region} $e:=\{ \rho \sin \varphi \vec e_r(\theta) +(1+\rho \cos \varphi) \vec e_3 
: 0\leq \rho \leq 1, 0 \leq \varphi \leq \frac{\pi}{2}\}$.
Here we set
	\begin{gather*}
    \vec v\big ( \vec e_3 + \rho\sin\varphi \vec e_r(\theta) + \rho\cos\varphi \vec e_3\big ) := v_\rho  \sin\varphi\, \vec e_r(\theta) + v_\rho \cos \varphi\,\vec e_3,\\
    v_\rho( \rho, \varphi) := (1+\rho)\cos \varphi, \quad  v_\varphi=\varphi.
    \end{gather*}
    
\smallskip
\noindent\textbf{Region} $f:=\{ \vec e_3 + \rho \sin \varphi \vec e_r(\theta)+\rho \cos \varphi \vec e_3 
: \rho \geq 1, 0\leq \varphi \leq \frac{\pi}{2}\}\cap B(\vec 0,3)$.
   In the original construction provided by \cite{CoDeLe03},
    the only requirements are
    that:
    \begin{itemize}
        \item The 2D representative of
    $$
    	\vec v\big (\vec e_3 + \rho\sin\varphi\vec e_r(\theta) + \rho\cos\varphi \vec e_3\big ) = v_\rho\sin v_\varphi \vec e_r(\theta) + v_\rho\cos v_\varphi\vec e_3, 
    	\quad \rho\geq 1,\ \varphi \in [0,\frac{\pi}{2}], 
    $$
    sends the 2D region 
    \begin{align*}
        \{(\rho\sin\varphi, 1+ \rho\cos\varphi): \rho\geq 1, 0\leq \varphi\leq \frac{\pi}{2}\}
        \cap B\big ( (0,0),3\big )
    \end{align*}
    in a bi-Lipschitz manner onto its image, which is contained in 
    \begin{align*}
        \{(v_\rho\sin v_\varphi, v_\rho\cos v_\varphi): v_\rho\geq 2\cos v_\varphi,\  0\leq  v_\varphi\leq \frac{\pi}{2}\}.
    \end{align*}

    \item  The map $\vec v$ matches the definition of region $e$ at the interface:
        $$
        v_\rho(1,\varphi):=2\cos\varphi,
        \qquad 
    	v_\varphi(1,\varphi):=\varphi.
    $$
    In particular,  $v_\rho(1,\frac{\pi}{2})=0$.
    \medskip

    \item The map $\vec v$ sends the half-line $\{(\rho, \frac{\pi}{2}): \rho\geq 1\}$ to the ray $\{\varphi=\tfrac{\pi}{2}\}$:
    $$
        v_\varphi(\rho, \frac{\pi}{2}) = \frac{\pi}{2},
        \qquad \rho\geq 1.
    $$
    \end{itemize}

Here, we simply use 
     \begin{align}
            \label{eq:particular_f}
         v_\rho(\rho,\varphi) = 2\cos\varphi + (\rho-1),
         \qquad
         v_\varphi(\rho,\varphi) = \varphi.
     \end{align}

\bigskip
\noindent\textbf{Region} $d:= \{ 0 \leq x_3\leq 1\}\cap B(\vec 0,3)$. 
In \cite{CoDeLe03}
the limit map $\vec v$ is defined as the composition of two maps. 
    The first one is an auxiliary generic axisymmetric
    \begin{align}\label{eq:auxiliary g}
         \vec g   \big ( \vec x\big)
        = 
        s(r,x_3)\,\vec e_r(\theta) + z(r,x_3)\,\vec e_3,
        \qquad 
        \vec x = r\vec e_r  +x_3 \vec e_3,
    \end{align}
    whose planar representative 
    is a bi-Lipschitz transformation
    from the 2D region
    $$
    \{(r,x_3):\ r\geq 0,\ 0\leq x_3\leq 1,\ r^2 +x_3^2 < 3^2\}
    $$
    onto its image, which is contained in 
    $$
    \{(s, z):\ s\geq 0,\ 0\leq z\leq 3\}.
    $$
    This transformation $\vec g$
    is then composed with the
    transformation from the cylindrical coordinates $(s,z)$ to the spherical coordinates $(s,\varphi(z))$:
    \begin{equation}\label{eq:def-v-region-d}    	\vec v \big ( r\vec e_r(\theta) + x_3\,\vec e_3\big ) 
    	= s\,\sin \big (\varphi(z)\big)\,\vec e_r(\theta) 
    	- s\,\cos \big ( \varphi(z)\big ) \,\vec e_3,
    \end{equation}
    with 
    $$
    \varphi(z):= \frac{\pi}{4}\bigg ( 1 + \frac{z}{3} \bigg ).$$
    As a result, the region $d$
    is mapped to the sector
    $\tfrac{\pi}{4}\leq \varphi\leq \tfrac{\pi}{2}$ in spherical coordinates
    (see Figure~\ref{fig:regions}).

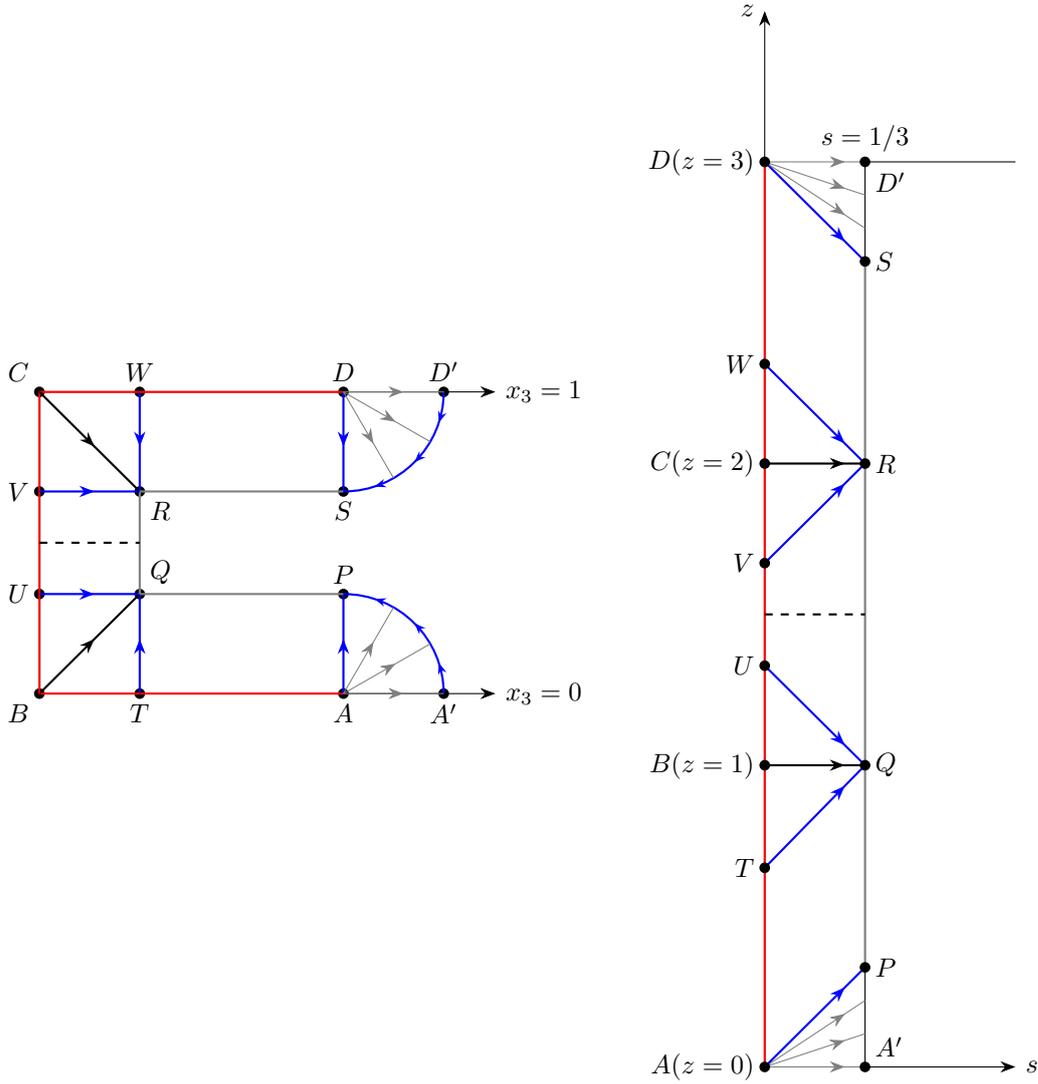
\begin{figure}[ht]
\centering

\begin{minipage}[c]{0.49\textwidth}
\centering
\begin{tikzpicture}[
    scale=4,
    >={Stealth[length=2mm]},
    flow/.style={postaction={decorate, decoration={markings, mark=at position 0.55 with {\arrow{>}}}}},
    bflow/.style={postaction={decorate, decoration={markings, mark=at position 0.6 with {\arrow{>}}}}},
    curveflow/.style={
        decoration={
            markings,
            mark=at position 0.2 with {\arrow{Stealth[length=1.5mm]}},
            mark=at position 0.5 with {\arrow{Stealth[length=1.5mm]}},
            mark=at position 0.8 with {\arrow{Stealth[length=1.5mm]}}
        },
        postaction={decorate}
    },
    font=\small
]

\coordinate (A) at (1,0);
\coordinate (Aprime) at (1.33,0);
\coordinate (B) at (0,0);
\coordinate (C) at (0,1);
\coordinate (D) at (1,1);
\coordinate (Dprime) at (1.33,1);

\coordinate (T) at (0.33, 0);
\coordinate (U) at (0, 0.33);
\coordinate (V) at (0, 0.67);
\coordinate (W) at (0.33, 1);

\coordinate (P) at (1, 0.33);
\coordinate (Q) at (0.33, 0.33);
\coordinate (R) at (0.33, 0.67);
\coordinate (S) at (1, 0.67);

\def\pointradius{0.5pt}
\fill (A) circle (\pointradius) node[below] {$A$};
\fill (B) circle (\pointradius) node[below left] {$B$};
\fill (C) circle (\pointradius) node[above left] {$C$};
\fill (D) circle (\pointradius) node[above] {$D$};

\fill (T) circle (\pointradius) node[below] {$T$};
\fill (U) circle (\pointradius) node[left] {$U$};
\fill (V) circle (\pointradius) node[left] {$V$};
\fill (W) circle (\pointradius) node[above] {$W$};

\fill (Q) circle (\pointradius) node[above right] {$Q$};
\fill (R) circle (\pointradius) node[below right] {$R$};
\fill (P) circle (\pointradius) node[above] {$P$};
\fill (S) circle (\pointradius) node[below] {$S$};

\fill (Aprime) circle (\pointradius) node[below] {$A'$};
\fill (Dprime) circle (\pointradius) node[above] {$D'$};

\draw[->] (0,0) -- (1.5,0) node[right] {$x_3=0$}; 
\draw[->] (0,1) -- (1.5,1) node[right] {$x_3=1$};
\draw[thick] (B) -- (C); 

\draw[thick, red] (A) -- (T);
\draw[thick, blue, flow] (A) -- (P);
\draw[thick, blue, flow] (T) -- (Q);
\draw[thick, gray] (Q) -- (P); 
\draw[thick, red] (T) -- (B);
\draw[thick, black, flow] (B) -- (Q);
\draw[thick, red] (B) -- (U);
\draw[thick, blue, flow] (U) -- (Q);
\draw[thick, red] (U) -- (V);
\draw[thick, gray] (R) -- (Q);
\draw[thick, red] (V) -- (C);
\draw[thick, red] (C) -- (W);
\draw[thick, blue, flow] (V) -- (R);
\draw[thick, black, flow] (C) -- (R);
\draw[thick, blue, flow] (W) -- (R);
\draw[thick, red] (W) -- (D);
\draw[thick, blue, flow] (D) -- (S);
\draw[thick, gray] (R) -- (S);

\coordinate (IUV) at ($(U)!0.5!(V)$); 
\coordinate (IQR) at ($(Q)!0.5!(R)$); 
\draw[thick, dashed] (IUV) -- (IQR);

\draw[thick, blue, curveflow] (Dprime) arc (0:-90:0.33);
\draw[thick, blue, curveflow] (Aprime) arc (0:90:0.33);

\coordinate (P1) at ($(A)+({0.33*cos(60)},{0.33*sin(60)})$);
\coordinate (P2) at ($(A)+({0.33*cos(30)},{0.33*sin(30)})$);
\draw[gray,bflow] (A) -- (P1);
\draw[gray,bflow] (A) -- (P2);
\draw[gray,bflow] (A) -- (Aprime);

\coordinate (S1) at ($(D)+({0.33*cos(60)},{-0.33*sin(60)})$);
\coordinate (S2) at ($(D)+({0.33*cos(30)},{-0.33*sin(30)})$);
\draw[gray,bflow] (D) -- (S1);
\draw[gray,bflow] (D) -- (S2);
\draw[gray,bflow] (D) -- (Dprime);

\end{tikzpicture}
\end{minipage}
\hspace{-25pt}
\begin{minipage}[c]{0.49\textwidth}
\centering
\begin{tikzpicture}[
    scale=4,
    >={Stealth[length=2mm]},
    flow/.style={postaction={decorate, decoration={markings, mark=at position 0.8 with {\arrow{>}}}}},
    font=\small
]

\coordinate (Origin) at (0,0);
\coordinate (A_def) at (0,0);
\coordinate (T_def) at (0,0.66);
\coordinate (B_def) at (0,1);
\coordinate (U_def) at (0,1.33); 
\coordinate (V_def) at (0,1.67);
\coordinate (IUV) at ($(U_def)!0.5!(V_def)$); 
\coordinate (C_def) at (0,2);
\coordinate (W_def) at (0,2.33);
\coordinate (D_def) at (0,3);
\def\sval{0.33}
\coordinate (P_def) at (\sval, 0.33);
\coordinate (P1) at (\sval, 0.22);
\coordinate (P2) at (\sval, 0.11);
\coordinate (Aprime) at (\sval, 0.0);
\coordinate (Q_def) at (\sval, 1);
\coordinate (R_def) at (\sval, 2);
\coordinate (IQR) at ($(Q_def)!0.5!(R_def)$); 
\coordinate (S_def) at (\sval, 2.67);
\coordinate (S1) at (\sval, 2.78);
\coordinate (S2) at (\sval, 2.89);
\coordinate (Dprime) at (\sval, 3.0);
\draw[->] (0,0) -- (0,3.5) node[left] {$z$};
\def\endrightaxisdeformed{2.5*\sval}
\draw[->] (0,0) -- (\endrightaxisdeformed,0) node[right] {$s$};
\draw[thin, black] (\sval,0) -- (\sval,3.0) node[above] {$s=1/3$};

\draw[gray, flow] (A_def) -- (P1); 
\draw[gray, flow] (A_def) -- (P2); 
\draw[gray, flow] (A_def) -- (Aprime); 
\draw[thick, blue, flow] (A_def) -- (P_def); \draw[thick, gray] (P_def) -- (Q_def);
\draw[thick, blue, flow] (T_def) -- (Q_def); 
\draw[thick, red] (A_def) -- (T_def);
\draw[thick, red] (T_def) -- (B_def);
\draw[thick, black, flow] (B_def) -- (Q_def); 
\draw[thick, red] (B_def) -- (U_def);
\draw[thick, blue, flow] (U_def) -- (Q_def);
\draw[thick, gray] (Q_def) -- (R_def); \draw[thick, blue, flow] (V_def) -- (R_def);
\draw[thick, red] (U_def) -- (V_def);
\draw[thick, dashed] (IUV) -- (IQR);
\draw[thick, red] (V_def) -- (C_def);
\draw[thick, black, flow] (C_def) -- (R_def);
\draw[thick, red] (C_def) -- (W_def);
\draw[thick, blue, flow] (W_def) -- (R_def);
\draw[thick, gray] (R_def) -- (S_def);
\draw[thick, blue, flow] (D_def) -- (S_def);
\draw[thick, red] (W_def) -- (D_def);
\draw[gray, flow] (D_def) -- (S1); 
\draw[gray, flow] (D_def) -- (S2); 
\draw[gray, flow] (D_def) -- (Dprime); 
\draw[black] (Dprime) -- (\endrightaxisdeformed,3.0);

\def\pointradius{0.5pt}
\fill (A_def) circle (\pointradius) node[left] {$A (z=0)$};
\fill (T_def) circle (\pointradius) node[left] {$T$};
\fill (B_def) circle (\pointradius) node[left] {$B (z=1)$};
\fill (U_def) circle (\pointradius) node[left] {$U$};
\fill (V_def) circle (\pointradius) node[left] {$V$};
\fill (C_def) circle (\pointradius) node[left] {$C (z=2)$};
\fill (W_def) circle (\pointradius) node[left] {$W$};
\fill (D_def) circle (\pointradius) node[left] {$D (z=3)$};
\fill (P_def) circle (\pointradius) node[right] {$P$};
\fill (Q_def) circle (\pointradius) node[right] {$Q$};
\fill (R_def) circle (\pointradius) node[right] {$R$};
\fill (S_def) circle (\pointradius) node[right] {$S$};
\fill (Aprime) circle (\pointradius) node[above right] {$A'$};
\fill (Dprime) circle (\pointradius) node[below right] {$D'$};


\end{tikzpicture}
\end{minipage}

\caption{The transformation $\vec g = s(r,x_3)\vec e_r + z(r,x_3)\vec e_3$ from the (planar representative slice of) region $d$ in the reference configuration (left) onto the intermediate  $(s,z)$ configuration (right).}
\label{fig:function g}
\end{figure}

    The transformation $\vec g$ must fulfill the following requirements:
    \begin{itemize}
        \item The points $$A(r=1, x_3=0),\quad  B(r=0, x_3=0),\quad  C(r=0, x_3=1),\quad D(r=1, x_3=1),$$
  respectively, are mapped to 
$$(s=0,z=0),\quad (s=0,z=1),\quad (s=0,z=2),\quad (s=0,z=3).$$ 
In the segments joining those points,  $\vec g$ is affine.
(After composing with the polar transformation $s\sin\varphi\,\vec e_r -s\cos\varphi\,\vec e_3$, the polygonal curve $ABCD$ is contracted to a point.)
    \smallskip 
    
     \item 
     At the interface $\{r\geq 1,\ x_3=1\}$,
	\begin{equation}
        \label{eq:continuity_f_d}
  	 s(r,1):=v_\rho(r,\frac{\pi}{2}),
     \quad z(r,1) \equiv 3,
	\end{equation}
    where $v_\rho(r,\varphi)$ is the radial spherical coordinate of the image in region $f$.
    In this article we choose a particular definition of this function, namely, the one in \eqref{eq:particular_f}. 
    Hence, \eqref{eq:continuity_f_d}
    becomes 
    \[
  	 s(r,1):=r-1, \quad z(r,1) \equiv 3.
	\]
    \medskip 
    
	\item 
    At the interface $\{r\geq 1, x_3=0\}$
	\begin{equation*}
		s(r ,0):=r-1,\quad z(r,0)\equiv 0.
	\end{equation*}
    This is consistent with the definition of the dipole in region $b$.
    \end{itemize}
    \bigskip

    In \cite{CoDeLe03,BaHEMoRo_24PUB} there is flexibility regarding the way to define $\vec g$ in the interior of the slab.    
    Here we are  going to use a specific function $\vec g$ (see Figure \ref{fig:function g}) whose radial coordinate $s(r,x_3)$ is given exactly by 
    \begin{align}
        \label{eq:def_g_specific}
        s=\dist(\vec x, U), \quad \vec x=r\vec e_r+x_3\vec e_3
    \end{align}
    in the subregion 
    $\{\vec x: 0\leq \dist(\vec x, U)\leq \tfrac{1}{3}\}$.

\bigskip
\noindent\textbf{Bounds on the Jacobian}.
Throughout the rest of the paper, $c$ will denote a universal positive constant whose precise value may change from  line to line.
We will also use the following elementary result.
\begin{lemma}
If $H : (0, \infty) \to \R$ is convex and $\lim_{s \to 0} H(s) = \infty$ then there exists $\delta>0$ such that $H$ is decreasing in $(0, \delta)$. 
\end{lemma}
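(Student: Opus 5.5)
The plan is to use the standard monotonicity property of convex functions of one variable: for a convex $H$ on $(0,\infty)$ the right derivative $H'_+$ exists everywhere and is nondecreasing. Equivalently, for fixed $t$ the difference quotient $(H(t)-H(s))/(t-s)$ is nondecreasing in $s$. It therefore suffices to find a point $t_0>0$ with $H'_+(t_0)<0$, or, without derivatives, two points $s_0<t_0$ with $H(s_0)>H(t_0)$. We will then take $\delta:=s_0$ and show that $H$ is decreasing on $(0,\delta)$.

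First, the existence of such a pair: fix any $t_0>0$. Since $\lim_{s\to0}H(s)=\infty$ and $H(t_0)$ is a finite real number, there is $s_0\in(0,t_0)$ with $H(s_0)>H(t_0)$. Hence the slope of the chord between $s_0$ and $t_0$ is strictly negative:
\[
\frac{H(t_0)-H(s_0)}{t_0-s_0}<0 .
\]

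Second, monotonicity on $(0,s_0)$. Take $0<x<y<s_0$. By the three-slope inequality for convex functions, applied successively to $x<y<s_0$ and $y<s_0<t_0$,
\[
\frac{H(y)-H(x)}{y-x}\le\frac{H(s_0)-H(y)}{s_0-y}\le\frac{H(t_0)-H(s_0)}{t_0-s_0}<0 .
\]
Hence $H(y)<H(x)$, so $H$ is strictly decreasing on $(0,s_0]$, and we take $\delta=s_0$.

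There is no real obstacle here. The only point needing care is the correct ordering of the chord-slope inequalities, namely that slopes of chords increase as the chord moves to the right. If one prefers derivatives instead, the same conclusion follows from $H'_+(y)\le (H(t_0)-H(s_0))/(t_0-s_0)<0$ for all $y<s_0$.
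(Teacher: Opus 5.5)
Your proof is correct and follows essentially the same route as the paper. The paper uses that the right derivative $H'_+$ of a convex function is nondecreasing and must tend to $-\infty$ as $s\to 0$, so it is negative near $0$. Your chord-slope version is slightly more elementary: instead of the limit claim, it exhibits a single negative chord slope obtained from $\lim_{s\to0}H(s)=\infty$, which is all that is actually needed.
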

\begin{proof}
As $H$ is convex, its right derivative $H'_+$ exists everywhere on $(0,\infty)$ and is
non-decreasing.
Then the limit $\lim_{s \to 0} H'_+(s)$ exists, and must be $- \infty$ since $\lim_{s \to 0} H(s) = \infty$.
Therefore, $H'_+$ is negative in $(0,\delta)$ for some $\delta>0$.
\end{proof}

In region $d$ we have\footnote{See, e.g., \cite[Appendix B.4]{BaHEMoRo_24PUB} for the computation of the determinant of a map expressed in cylindrical-spherical coordinates.}
\begin{align*}
    \det D\vec v(\vec x) = \frac{s^2(r,x_3)\sin\varphi(z(r,x_3))}{r}
    \geq \frac{\sqrt{2}}{2}\frac{s^2}{r}
\end{align*}
since $\tfrac{\pi}{4}\leq \varphi \leq \tfrac{\pi}{2}$.
This determinant goes to zero as $\vec x$ approaches 
the polygonal line $ABCD$ on $\partial U$.
Focusing on the region where $s\leq \tfrac{1}{3}$
(see Figure \ref{fig:function g}),
we have that $r\leq \tfrac{4}{3}$ and, hence,
$\det D\vec v \geq c \, s^2$.
By symmetry with respect to the plane $\{x_3=\tfrac{1}{2}\}$, it is enough to consider
$\vec x=r\vec e_r+x_3\vec e_3$ with $x_3\in [\tfrac{1}{2},1]$.
Here one has
\[
\dist(\vec x,U)\geq \min\{r,\,1-x_3\},
\]
since the distance from $\vec x$ to the axis is $r$, while the distance from $\vec x$
to the top disk $\{x_3=1,\ 0\leq r\leq 1\}$ is at least $1-x_3$.
Hence, by \eqref{eq:def_g_specific}, we have
\[
\det D\vec v(\vec x)
\geq c\,s^2(r,x_3)
\geq c\,\min\{r^2,\,(1-x_3)^2\}.
\]
Using that $H$ is decreasing near $0$, and using \eqref{eq:growth condition}, we obtain
\[
H(\det D\vec v(\vec x))
\leq c\,\min\{r^2,\,(1-x_3)^2\}^{-\alpha}
\leq c\Bigl(r^{-2\alpha}+(1-x_3)^{-2\alpha}\Bigr).
\]
Therefore,
\begin{equation}\label{eq:integrability_H}
\int_{\{s\leq \tfrac{1}{3}\}} H(\det D\vec v)\,\dd\vec x
\leq
c\int_{0}^{4/3}\int_{1/2}^{1}
\Bigl(r^{-2\alpha}+(1-x_3)^{-2\alpha}\Bigr)\,2\pi r\,\dd x_3\dd r,
\end{equation}
i.e., in this region $H(\det D\vec v)$ is integrable if $\alpha<\frac{1}{2}$.

\smallskip
In region~$e$ we have $v_\varphi=\varphi$ and
$v_\rho(\rho,\varphi)=(1+\rho)\cos\varphi$,
so that $\p_\rho v_\rho=\cos\varphi$ and\footnote{See, e.g., \cite[Appendix B.3]{BaHEMoRo_24PUB} for the computation of the determinant of a map expressed in spherical-spherical coordinates.}
\[
  \det D\vec v
  =\frac{v_\rho^2}{\rho^2}\,\p_\rho v_\rho
  =\frac{(1+\rho)^2\cos^3\varphi}{\rho^2}\,.
\]
This determinant is close to zero near the interface with region~$d$
(particularly at the triple junction, where $\rho\approx 1$ and
$\cos\varphi\approx 0$) and unbounded near the cavitation point
$\vec 0'=(0,0,1)$, where $\rho\to 0^+$.
To estimate the integral of $H(\det D\vec v)$ we split region~$e$
into $\{\det D\vec v\le 1\}$ and $\{\det D\vec v>1\}$.
On the first set the growth condition~\eqref{eq:growth condition}
gives $H(t)\le c\,t^{-\alpha}$ for all $t\in(0,1]$
(after adjusting the constant~$c$, since $H$ is continuous on $(0,\infty)$),
while on the second set $H(t)\le c\,t^{\beta}$ for all $t\ge 1$.
Writing the volume element in spherical coordinates as
$\dd\vec x=2\pi\,\rho^2\sin\varphi\,\dd\rho\,\dd\varphi$
and enlarging each integral to the full region~$e$, we obtain
\begin{align*}
  \int_{\{\det D\vec v\le 1\}\cap\, e} H(\det D\vec v)\,\dd\vec x
  &\le
  c\int_0^{\pi/2}\!\!\int_0^1
  \Bigl(\frac{(1+\rho)^2\cos^3\varphi}{\rho^2}\Bigr)^{\!-\alpha}
  \rho^2\sin\varphi\;\dd\rho\,\dd\varphi\\
  &= c
  \int_0^{\pi/2}\cos^{-3\alpha}\!\varphi\;
    \sin\varphi\;\dd\varphi
  \;\cdot\;
  \int_0^1(1+\rho)^{-2\alpha}\rho^{2+2\alpha}\dd\rho\,,
\end{align*}
and
\begin{align*}
  \int_{\{\det D\vec v > 1\}\cap\, e} H(\det D\vec v)\,\dd\vec x
  &\le
  c\int_0^{\pi/2}\!\!\int_0^1
  \Bigl(\frac{(1+\rho)^2\cos^3\varphi}{\rho^2}\Bigr)^{\!\beta}
  \rho^2\sin\varphi\;\dd\rho\,\dd\varphi\\
  &= c
  \int_0^{\pi/2}\cos^{3\beta}\!\varphi\;
    \sin\varphi\;\dd\varphi
  \;\cdot\;
  \int_0^1(1+\rho)^{2\beta}\rho^{2-2\beta}\dd\rho\,.
\end{align*}
Both right-hand sides are finite under condition~\eqref{eq:growth condition} with \(\alpha<1/3\) and \(\beta<3/2\),
so $H(\det D\vec v)$ is integrable in region~$e$.

One can check that in the remaining regions the integrability of the
Jacobian is not worse (for more details, see \cite[Section 3]{BaHEMoRo_24PUB}).

\bigskip

\section{The approximating sequence}\label{app:dipole approx}

We recall from \cite[Theorem 1.2]{BaHEMoRo_24PUB} that for every sufficiently small $\e>0$ there exists 
an axisymmetric map $\vec u_\e\in\mathcal{A}^r$ with the following properties:
\begin{enumerate}[(i)]
  \item $\vec u_\e$ is a bi-Lipschitz homeomorphism of $B(\vec0,4)$, orientation preserving, and coincides with the identity on $\p B(\vec0,4)$;
  \item $\vec u_\e\rightharpoonup \vec v$ in $H^1(B(\vec0,4),\R^3)$ and a.e.\ in $B(\vec0,4)$;
  \item the energies are uniformly bounded, i.e.,
  \begin{equation*}
    \sup_{\e>0}E(\vec u_\e)<\infty.
  \end{equation*}
\end{enumerate}

The approximation $\vec u_\e$ is defined (in the regions of our interest) as follows. 
We refer to \cite{BaHEMoRo_24PUB} for the full construction.
From now on, a choice is made of
	$$
		0<\gamma\leq \frac{1}{3},
		\quad \text{a fixed positive exponent.}
	$$
This range for $\gamma$ played an important role in \cite{BaHEMoRo_24PUB}
in order to show that  $\vec u_\e$
is an approximating sequence for the dipole $\vec v$  with optimal energy
(compare with \eqref{eq:defEta}).


\smallskip
\noindent\textbf{Region} $b$. We work with spherical coordinates 
\( (\rho,\theta,\varphi)\), \( 1<\rho <3, \ 0\leq \theta <2 \pi, \ \pi/2\leq \varphi \leq \pi\) 
in the reference configuration and cylindrical coordinates in the deformed configuration. We first define 
the auxiliary functions 
\begin{equation*}\begin{split}
\vec d(\varphi,\theta)&:=\sin \left(\frac{\varphi + \pi}{2} \right)\,\vec e_r(\theta)+\cos \left(\frac{\varphi + \pi}{2} \right)\,\vec e_3,\\[5pt]
\vec\phi_\e(\vec x)&:= (\rho-1+\sqrt2\,\e^\gamma)\,\vec d(\varphi,\theta)+\e^\gamma\,\vec e_3.
\end{split}\end{equation*}

With this notation, we have $\vec v(\vec x)= (\rho-1)\,\vec d(\varphi,\theta)$.
The factor $\sqrt{2}$ appears because, according to the definition of $\vec u_\e$ in region $a_\e$, the image of $A$ is
$(\e^\gamma, 0)$, whose distance to $(0,\e^\gamma)$ is $\sqrt{2}\e^\gamma$. In region $b$ we define $\vec u_\e$ to be 
\begin{align} \label{eq:def-u_eps-region-b}
\vec u_\e (\vec x) = \e^\gamma \vec \psi \Big (\e^{-\gamma} \vec \phi_\e(\vec x) \Big ),
\end{align}
where $\vec\psi$ is any axisymmetric bi-Lipschitz bijection from
$$
	\Big \{(x_1, x_2, x_3):\ r^2 + (x_3-1)^2 \geq 2,\ x_3\leq 0,\ r\leq 1 + |x_3|\Big \}
$$
onto
$$
	\Big \{(x_1, x_2, x_3):\ x_3\leq 0,\ r\leq 1 + |x_3|\Big \} \cup B\big ( (0,0,0),\, 1\big )
$$
such that
\begin{enumerate}[i)]
	\item $\vec\psi(r, x_3)=(r,x_3)$ on the half-line $r=1+|x_3|$, $x_3\leq 0$,
	\item $\vec\psi \big ( \sqrt{2}\sin (\bar\varphi), 1 + \sqrt{2}\cos (\bar\varphi) \big ) 
		= \Big ( \sin \big ( 2(\pi -\bar\varphi)\big ), \cos \big ( 2(\pi - \bar \varphi)\big ) \Big ) $,
		 $\frac{3\pi}{4}\leq \bar\varphi \leq \pi$,
	\item $\vec\psi\equiv \mathbf{id}$ in $\{\vec x:\ r^2 +(x_3-1)^2 \geq 8,\ x_3\leq 0,\ r\leq 1+|x_3|\}.$
\end{enumerate}

\smallskip
\noindent\textbf{Region} $e_\e:= \{ \vec e_3+ \rho \sin \varphi \vec e_r(\theta) + \rho \cos \varphi \vec e_3 
: \ \e < \rho< 1, \ 0\leq \varphi \leq \frac{\pi}{2}\}$.
Here we define $\vec u_\e$ in region $e_\e$ through its spherical coordinates
	\begin{align*}
&	u_\varphi^\e (\rho, \varphi) := \varphi,
	\qquad
	 u_\rho^\e (\rho ,\varphi):= \frac{1-\rho}{1-\e} u_\rho^\e(\e,\varphi) + \frac{\rho-\e}{1-\e} (2\cos\varphi + 6\e^{\gamma}),
	\\ &	u_\rho^\e(\e,\varphi)=
\Bigg ( 
		 ( \cos \varphi + 2\e^\gamma )^3 
		+ \left [ \frac{3r}{\partial_r \big ( -\cos f(r) \big ) }\right]_{r=g_\e(\varphi)}
 + 3\int_{s=0}^1 h_\e(s, \varphi)\dd s
	\Bigg )^{\frac{1}{3}},
	\end{align*}
	the function $h_\e(s,\varphi)$ being defined by
	
\begin{equation} \label{eq:def_h}
h_\e(s,\varphi):= \e \Big ( (1-s) \frac{g_\e(\varphi)}{\sin\varphi}  +s\e \Big ) \Big ( (1-s) g_\e'(\varphi)\cos\varphi
+ s (\e-g_\e(\varphi)\sin\varphi)\Big ),
\end{equation}
with $g_\e:[0,\frac{\pi}{2}]\to [0,\e]$ the inverse of the function $f_\e$ defined by
\begin{align}\label{eq:def_f_eps}
    f_\e (r):=\arctan \Big ( \frac{r}{\e^2} \Big ) + \alpha_\e \frac{r}{\e},
        \quad 0\leq r \leq \e,
        \qquad 
    \alpha_\e:=\arctan(\e).
\end{align}
	
\smallskip
\noindent\textbf{Region} $f$. 
 We define $\vec u_\e$ through its spherical coordinates
	\begin{align*}
	u_\rho^\e (\rho, \varphi) := 2\cos\varphi + \rho-1 + 6\e^\gamma,
	\qquad
	 u_\varphi^\e (\rho ,\varphi):= \varphi.
	\end{align*}
	Note that when $\rho=1$ the radial coordinate $$u_\rho^\e(1,\varphi)=2\cos\varphi + 6\e^{\gamma}$$
	coincides with
	the definition given in region $e_\e$.

\smallskip
\noindent\textbf{Region} $d_\e :=\{ x_1^2 +x_2^2 >\e^2,\ 0<x_3<1\}$.
We use cylindrical coordinates in the domain. The approximating map 
    \begin{equation}\label{eq:def-u_eps-region-d}
    \vec u_\e \big ( r\vec e_r(\theta) + x_3 \vec e_3\big ) =
        \vec w_\e \Big ( \vec g(\hat r\vec e_r(\theta) + x_3\,\vec e_3)\Big )
		=w_r^\e(s,z) \vec e_r(\theta) + w_3^\e(s,z)\, \vec e_3,
    \end{equation}
is defined by composing three different auxiliary transformations. The first one is given by 
	\begin{equation}
 \label{eq:rPrimeRegionD}
		\hat r=\begin{cases} \displaystyle \frac{(r-\e)r}{\e^{2\gamma}-\e}, & \e < r\leq \e^{2\gamma},\\ r, & r\geq \e^{2\gamma}.
		\end{cases}
	\end{equation}
After this transformation the radial distance lies in the same interval $(0,\infty)$ for all $\e$. 
The second transformation is the fixed axisymmetric map 
\begin{align*}
   \vec g (\hat r\vec e_r(\theta) + x_3\vec e_3) 
   =
   s(\hat r, x_3)\vec e_r 
   + z(\hat r, x_3)\vec e_3
\end{align*}
used before in region \(d\).
The third transformation is 
	\[
		\vec w_\e\big ( s\,\vec e_r(\theta) + z\,\vec e_3\big ) = w_r^\e(s,z)\,\vec e_r(\theta) + w_3^\e(s,z)\, \vec e_3
	\]
	\begin{equation}
    	\label{eq:defWr}
	 \begin{aligned}
		w_r^\e(s,z) &= \omega_\e(z) + s\,\sin \big ( \varphi(z)\big ),
		\\
		w_3^\e(s, z) &= -s\,\cos \big ( \varphi(z)\big ),
	 \end{aligned}
	\qquad \text{with}\quad
\varphi(z):= \tfrac{\pi}{4} \left (1+\tfrac{z}{3}\right ).	
	\end{equation}
The function 
$\omega_\e:[0,3]\to\R$ 
in \eqref{eq:defWr} is 
defined as
    \begin{align*}
            \omega_\e (z) = \begin{cases}
            z (2\e^\gamma) + (1-z)\e^\gamma, & 0\leq z\leq 1,
            \\
            \left( (2\e^\gamma)^3 + (z-1)\cdot \displaystyle \frac{3\e}{f_\e'(\e)} \right)^{1/3}, & 
            1\leq z\leq 2,
            \\
            (3-z)\eta_\e + (z-2)(6\e^{\gamma}),& 2\leq z\leq 3,
	   \end{cases}
    \end{align*}
   with \(f_\e\) defined in \eqref{eq:def_f_eps}. Notice that it is piecewise $C^1$ and 
    \begin{align*} 
    \omega_\e(0)=\e^\gamma,\quad \omega_\e(1)=2\e^\gamma,
        \quad \omega_\e(2)=\eta_\e
        =2\e^\gamma + o(\e^\gamma),
        \quad \omega_\e(3)=6\e^{\gamma}.
    \end{align*}
  Also, $\omega_\e(z)$ is increasing and, because of that,
  \begin{align}
        \label{eq:boundOmegaEps}
      \e^\gamma \leq \omega_\e(z)\leq 6\e^\gamma\quad \text{for all}\ z\in [0,3].
  \end{align}
In addition, as mentioned in \cite[Section~3.3.8]{BaHEMoRo_24PUB}, using that $\gamma\leq \frac{1}{3}$ it can be checked that 
  \begin{equation}
    \label{eq:omega-prime}
    |\omega'_\e(z)|\le c\,\e^\gamma \qquad\text{for all }z\in[0,3].
  \end{equation}

The definition of $\vec u_\e$  matches the one
already provided in regions $a_\e$, $b$, $c_\e$,  $e_\e$, and $f$.
In particular,
\begin{itemize}
\item On the interface between $d_\e$ and $b$, the deformation $\vec u_\e$ is:
    $$
    r\geq 1\quad \Rightarrow\quad \vec u_\e\big ( r\,\vec e_r(\theta) \big )
    = \e^\gamma\vec e_r(\theta)+ (r-1) \frac{\vec e_r(\theta) - \vec e_3}{\sqrt{2}}.
        $$
\item On the interface between $e_\e$ and $d_\e$, the deformation is: 
	$$
		\e\leq r\leq 1\ \Rightarrow\ 
		\vec u_\e\big (\vec e_3 + r\,\vec e_r(\theta)\big ) = \left(\frac{1-r}{1-\e} \eta_\e + \frac{r-\e}{1-\e} 6\e^{\gamma}\right)\vec e_r(\theta),
	$$
	with 
	\begin{equation}\begin{split}
        \label{eq:defEta}
\eta_\e  := 
    & 
u_\rho^\e(\e,\frac{\pi}{2})
    = \Bigg ( (2\e^\gamma)^3  + \frac{3r}{\sin f_\e(r)f'_\e(r)}\Bigg |_{r=\e}\Bigg )^{\frac{1}{3}}
      = \Bigg ( 
		(2\e^\gamma)^3
	 + \frac{3\e}{f'_\e(\e)}
	 \Bigg )^{\frac{1}{3}}
     \\ 
     =
     &
     2\e^\gamma + o(\e^\gamma).
     \end{split}
	\end{equation}
	This relevant quantity $\eta_\e$ is of order $\e^{\gamma}$ (this is why we chose  \(\gamma\leq 1/3\)).

	\item On the interface with $f$, the deformation is prescribed to be
	$$
	 r\geq 1
	 \quad \Rightarrow \quad
	 \vec u_\e\big (\vec e_3 + r \,\vec e_r(\theta)\big )
		= \big(r-1 + 6\e^{\gamma}\big )\,\vec e_r(\theta).
	$$
\end{itemize}

\bigskip
\noindent \textbf{Energy bounds.}
In \cite[Section 3.3.3]{BaHEMoRo_24PUB} it is shown that in those parts of the domain where $\det D\vec v$ is close to zero, a lower bound of the form 
$$
    \det D\vec u_\e(\vec x)\geq c\det D\vec v(\vec x)
$$
holds. Because of that, the energy
of $\vec u_\e$ is controlled.

\begin{remark}[The scale $\e^\gamma$]\label{rem:origin-eps-gamma}
In the construction we fixed an exponent $0<\gamma\le \frac13$ and we repaired the singular
behavior of $\vec v$ by \emph{opening} the collapsed set (the axis/neck) at a small geometric
scale of order $\e^\gamma$.

More precisely, the maps $\vec u_\e$ coincide with $\vec v$ away from the singular set, but
differ from $\vec v$ by shifts/extrusions whose amplitude is $\sim\e^\gamma$:
\begin{itemize}
\item in region $b$, the auxiliary map $\vec\phi_\e$ differs from $\vec v$ by the translations
$\sqrt2\,\e^\gamma$ along the direction $\vec d(\varphi,\theta)$ and $\e^\gamma$ in the $\vec e_3$--direction
(see \eqref{eq:def-u_eps-region-b});
\item in the neck $d_\e$, the ``horizontal extrusion'' function satisfies
$\e^\gamma\le \omega_\e(z)\le 6\e^\gamma$ for all $z\in[0,3]$ (see \eqref{eq:boundOmegaEps}),
so that the radius of the opened tube is comparable to $\e^\gamma$;
\item the matching radius at the interface between $e_\e$ and $d_\e$ is
$\eta_\e=2\e^\gamma+o(\e^\gamma)$ (see \eqref{eq:defEta}), which is the reason for the restriction
$\gamma\le \frac13$ in \cite{BaHEMoRo_24PUB}.
\end{itemize}
\end{remark}


\section{Patch of the dipole}\label{app:proof theo approx}

We provide here the detailed construction of the maps $\vec b_\delta$ provided in Theorem \ref{theo: approx} and the estimate of their energy.

\begin{proof}[Proof of Theorem \ref{theo: approx}]

Given the complexity of the proof, we first outline our overall strategy.
\begin{enumerate}[(i)]
\item \textbf{Starting point:} The Conti--De Lellis dipole $\vec v$ is a singular map 
that fails to be one-to-one only on the boundary of the ``bad'' set $U$.
Outside any neighborhood $U_\delta$ of $U$, the map $\vec v$ is a bi-Lipschitz homeomorphism onto its image.
By the construction in \cite{BaHEMoRo_24PUB} recalled in the Appendix \ref{app:dipole approx}, 
there exist regular approximations $\vec u_\e \in \mathcal{A}^r$ with uniformly bounded energy, i.e.,
$\sup_{\e>0} E(\vec u_\e) < \infty$.

\smallskip
\item \textbf{Definition of $\vec b_\delta$:} 
For each $\delta \in (0,1]$, we choose $\e > 0$ 
sufficiently small so that $\delta \geq c_0 \e^\gamma$ for a constant $c_0$ large enough. 
The scale $\e^\gamma$ is related to the energy concentration produced by $\vec v$; see Remark \ref{rem:origin-eps-gamma}.
Then we define
\begin{equation}\label{eq:bdelta-core-ext}
\vec b_\delta(\vec x) := 
\begin{cases}
\vec u_\e(\vec x) & \text{if } \dist(\vec x, U) \leq \delta/2, \\[4pt]
\text{interpolation} & \text{if } \delta/2 \leq \dist(\vec x, U) \leq \delta, \\[4pt]
\vec v(\vec x) & \text{if } \dist(\vec x, U) \geq \delta.
\end{cases}
\end{equation}

The key point is that the interpolation introduces gradient perturbations of order $\e^\gamma/\delta$, 
which remain controlled thanks to the scaling $\delta \geq c_0\e^\gamma$.

The transition layer $T_\delta := U_\delta \setminus U_{\delta/2}$ is handled separately in each 
of the regions $b$, $d$, $e$, and $f$, using interpolations adapted to the local 
geometry and coordinate systems.

\smallskip
\item \textbf{Verification:} We show that $\vec b_\delta$ is continuous across all the interfaces, 
bi-Lipschitz and orientation-preserving (by checking $\det D\vec b_\delta > 0$ in each region), 
and has energy bounded independently of $\delta$. 
\end{enumerate}

\smallskip
We now proceed with the detailed construction.

Fix $\delta\in(0,1]$ and let $U_\delta:=\{\vec x\in B(\vec 0,4)\colon \dist(\vec x,U)<\delta\}$ as in the statement (see Figure~\ref{fig:regions-delta}).  
Let $0<\gamma\le 1/3$ 
be the exponent used in the definition of the approximating maps $\vec u_\e$ in the Appendix \ref{app:dipole approx}.

\textbf{Step~1: definition of $\vec b_\delta$.}

In this step we define $\vec b_\delta$ in the transition layer
\[
  T_\delta:=U_\delta\setminus U_{\delta/2}=\{\vec x\in B(\vec0,4)\colon \delta/2\le\dist(\vec x,U)\le\delta\}.
\]

We choose $\e=\e(\delta)>0$ so that  
\begin{equation}\label{eq:delta-eps}
\delta \ge c_0\,\e^\gamma
\end{equation}
for a suitable constant $c_0$ that we tune along the construction.
In the rest of the proof we abbreviate $\vec u:=\vec u_\e$.
Let $\chi_\delta:[0,\infty)\to[0,1]$ be the scalar cut-off function defined by
\[
\chi_\delta(t):=
\begin{cases}
1, & 0\le t\le\delta/2,\\[1mm]
\displaystyle\frac{\delta-t}{\delta/2}, & \delta/2\le t\le\delta,\\[1mm]
0, & t\ge\delta .
\end{cases}
\]

Notice the scale that governs the choice of $\e=\e(\delta)$ in the patching:
in the transition layer $T_\delta$ the cut-off produces gradient terms of the form
$(\vec u_\e-\vec v)\otimes \nabla\chi_\delta$.
Since $|\nabla\chi_\delta|\sim \delta^{-1}$ and $|\vec u_\e-\vec v|\sim \e^\gamma$ on the relevant interfaces,
the perturbation is of size $\e^\gamma/\delta$.
The compatibility condition \eqref{eq:delta-eps} is therefore the
natural requirement ensuring that the interpolation does not create excessive strains and that the
Jacobian remains positive throughout $T_\delta$.

\smallskip
We define $\vec b_\delta$ separately in the four regions $b$, $d$, $e$, and $f$.
 
\smallskip
\noindent\textbf{Transition layer in region $b$.}

In spherical coordinates, region $b$ is given by
\[
\{\,\vec x=\rho\sin\varphi\,\vec e_r(\theta)+\rho\cos\varphi\,\vec e_3:
\ 1<\rho\le 3,\ \tfrac{\pi}{2}\le\varphi\le\pi\,\},
\]
and $\dist(\vec x,U)=\rho-1$. Set $s:=\rho-1$ and recall from Appendix \ref{app:dipole approx} 
the auxiliary maps used in the definition of $\vec u$ in region $b$:
\begin{equation*}\begin{split}
\vec d(\varphi,\theta)&=\sin\left(\frac{\varphi + \pi}{2} \right)\,\vec e_r(\theta)+\cos \left(\frac{\varphi + \pi}{2} \right)\,\vec e_3,\\[5pt]
\vec\phi_\e(\vec x)&=(s+\sqrt2\,\e^\gamma)\,\vec d(\varphi,\theta)+\e^\gamma\,\vec e_3.
\end{split}\end{equation*}
In this region $\vec v(\vec x)= s\,\vec d(\varphi,\theta)$. Moreover, $\vec v$ and $\vec u=\vec u_\e$ have angular component $\tfrac{\varphi+\pi}{2}$.

We define $\vec b_\delta$ on $b\cap T_\delta$ (i.e.,\ for $\delta/2\le s\le\delta$) by cutting off exactly
the two $\e^\gamma$--shifts that distinguish $\vec\phi_\e$ from $\vec v$:
\[
\vec b_\delta(\vec x)
:= s\,\vec d(\varphi,\theta)+\chi_\delta(s)\Big(\sqrt2\,\e^\gamma\,\vec d(\varphi,\theta)+\e^\gamma\,\vec e_3\Big)
= \big(s+\chi_\delta(s)\sqrt2\,\e^\gamma\big)\vec d(\varphi,\theta)+\chi_\delta(s)\e^\gamma\,\vec e_3.
\]

Then $\vec b_\delta=\vec v$ on $\{s=\delta\}$ (since $\chi_\delta(\delta)=0$). Also, on $\{s=\delta/2\}$
we have $\chi_\delta(\delta/2)=1$ and hence $\vec b_\delta=\vec\phi_\e$. Moreover,
\[
(\e^{-\gamma}\phi_r^\e)^2+(\e^{-\gamma}\phi_3^\e-1)^2
=\Big(\frac{s}{\e^\gamma}+\sqrt2\Big)^2,
\]
so under the choice $\delta\ge c_0\e^\gamma$ (with $c_0$ large enough) this quantity is $\ge 8$
when $s=\delta/2$, and therefore $\vec\psi\equiv\mathbf{id}$ on $(\e^{-\gamma}\vec\phi_\e)^{-1}(\{s=\delta/2\})$.
Consequently, $\vec u(\vec x)=\e^\gamma\vec\psi(\e^{-\gamma}\vec\phi_\e(\vec x))=\vec\phi_\e(\vec x)=\vec b_\delta(\vec x)$
on $\{s=\delta/2\}$, so $\vec b_\delta$ matches continuously with $\vec u$ across the interface.

\smallskip
\noindent\textbf{Transition layer in region $d$.}

Within the neck $d$, the limit map $\vec v$ is singular on the axis: it collapses the vertical segment $S$ to the origin, 
whereas $\vec u_\e$ ``opens'' the axis to form a cylindrical cavity of radius $\sim \e^\gamma$. 
A generic linear interpolation would introduce a gradient 
perturbation of order $\sim \e^\gamma/\delta$. 
However, we have also to control the determinant.
The explicit construction is thus necessary.

Let $\vec g$ be the auxiliary map defined in \eqref{eq:auxiliary g} and \eqref{eq:def_g_specific}.
By the bi-Lipschitz property of the planar representative of $\vec g$, after possibly decreasing $\e$ we can also ensure
the separation of the image through $\vec u_\e$ of the level set $\dist(\vec x, U) = \delta/2$ and the image through $\vec v$
of the level set 
\hbox{$\dist(\vec x, U) = \delta$}.
In particular, for \(\e\) small enough, $\delta/2\ge2\sqrt2\,\e^\gamma$, and for every $\vec x\in d\cap T_\delta$ we have $r=\dist(\vec x,U)\ge\delta/2$.
We set
\begin{equation*}
  \vec b_\delta(\vec x)=w_r(\vec x)\,\vec e_r(\theta)+w_3(\vec x)\,\vec e_3,
\end{equation*}
where 
\begin{equation}\label{eq:wr-def}
\begin{cases}
w_r(\vec x) :=\chi_\delta(s)\omega_\e(z)+s\sin\varphi(z),\\[3pt]
w_3(\vec x) :=-s(r,x_3)\cos\varphi(z(r,x_3)),
\end{cases}
\end{equation}
with $z=z(r,x_3)$ the vertical coordinate in the intermediate configuration of Figure \ref{fig:function g}.
The cut-off $\chi_\delta(s)=\frac{\delta-s}{\delta/2}$ reduces the ``horizontal extrusion''
$\omega_\e(z)\vec e_r$ from full magnitude at the level curve
$\dist(\vec x,U)=\tfrac{\delta}{2}$ (where $\chi_\delta=1$) to zero at the level curve
$\dist(\vec x,U)=\delta$ (where $\chi_\delta=0$). By the definition of $\vec v$ in
\eqref{eq:def-v-region-d} and the definition of $\vec u$ in
\eqref{eq:def-u_eps-region-d}, one has that $\vec b_\delta$ satisfies
\eqref{eq:bdelta-core-ext}.

\begin{figure}[ht]
\centering

\begin{tikzpicture}[
    scale=3,
    >={Stealth[length=2mm]},
    flow/.style={postaction={decorate, decoration={markings, mark=at position 0.55 with {\arrow{>}}}}},
    bflow/.style={postaction={decorate, decoration={markings, mark=at position 0.6 with {\arrow{>}}}}},
    curveflow/.style={
        decoration={
            markings,
            mark=at position 0.15 with {\arrow{Stealth[length=2mm]}},
            mark=at position 0.35 with {\arrow{Stealth[length=2mm]}},
            mark=at position 0.55 with {\arrow{Stealth[length=2mm]}},
            mark=at position 0.75 with {\arrow{Stealth[length=2mm]}},
            mark=at position 0.95 with {\arrow{Stealth[length=2mm]}}
        },
        postaction={decorate}
    },
    font=\small
]

\def\del{3}
\def\pointradius{0.5pt}
\def\varphiValue{45}
\def\epsgamma{\del/15}

\coordinate (O) at (0,0);
\coordinate (P) at (\del/2,0);
\coordinate (Q) at ($(P) + (6*\epsgamma,0) $);
\coordinate (R) at (\del, 0);
\coordinate (P') at ($({\del/2*sin(\varphiValue)}, {-\del/2*cos(\varphiValue)})$);
\coordinate (Q') at ($(P') + (\epsgamma,0) $);
\coordinate (R') at ($({\del*sin(\varphiValue)}, {-\del*cos(\varphiValue)})$);
\coordinate (R'') at ($({1.1*\del*sin(\varphiValue)}, {-1.1*\del*cos(\varphiValue)})$);

\fill (O) circle (\pointradius) node[below] {$O$};
\fill (Q) circle (\pointradius);
\fill (R) circle (\pointradius);
\fill (Q') circle (\pointradius);
\fill (R') circle (\pointradius);

\draw[->] (O) -- (1.1*\del,0) node[right] {$\varphi=\tfrac{\pi}{2}$};
\draw[->] (O) -- (R'') node[below right] {$\varphi=\tfrac{\pi}{4}$};

\draw[thick] (P) arc (0:-45:\del/2);
\draw[ultra thick, curveflow] (R') arc (-45:0:\del);

\draw[thick,bflow,red] (P') -- ($(P') + (\epsgamma,0) $);
\draw[thick,bflow,red] (P) -- (Q);

\foreach \ang/\t in {45/0.0, 54/0.2, 63/0.4, 72/0.6, 81/0.8, 90/1.0}{
  \pgfmathsetmacro{\len}{\epsgamma + \t*(6*\epsgamma-\epsgamma)}
  \coordinate (A\ang) at ({\del/2*sin(\ang)},{-\del/2*cos(\ang)});
  \coordinate (B\ang) at ($(A\ang)+(\len,0)$);
  \fill (B\ang) circle (\pointradius);
  \draw[thick,bflow,red] (A\ang) -- (B\ang);

  \coordinate (R\ang) at ({\del*sin(\ang)},{-\del*cos(\ang)});
  \fill (R\ang) circle (\pointradius);

  \draw[dashed,gray] (B\ang) -- (R\ang);
}

\draw[
    ultra thick,
    blue,
    curveflow
]
plot[smooth] coordinates {
    (B45)
    (B54)
    (B63)
    (B72)
    (B81)
    (B90)
};

\coordinate (A6372) at ($(A63)!0.6!(A72)$);
\node[left=2pt of A6372] {${\bm v}(\{s=\tfrac{\delta}{2}\})$};
\coordinate (B6372) at ($(B63)!0.2!(B72)$);
\node[right=2pt of B6372] {${\bm u}(\{s=\tfrac{\delta}{2}\})$};
\coordinate (R6372) at ($(R63)!0.5!(R72)$);
\node[right=1pt of R6372] {${\bm v}(\{s=\delta\})$};
\coordinate (OR') at ($(O)!0.6!(R')$);
\node[below left=0pt of OR'] {${\bm v}(\{x_3=0\})$};

\draw[decorate, decoration={brace, amplitude=5pt}] (P) -- (Q)
    node[midway, yshift=5pt, above,red] {$6\e^\gamma$};

\end{tikzpicture}

\caption{The red arrows illustrate the effect of the function $\omega_\e(z)$,
which extrudes horizontally the image by $\vec v$ of the level curve $\{\vec x\in\Om: s=\dist(\vec x, U)=\tfrac{\delta}{2}\}$.
This gives rise to the blue curve, which is the image of the same level set by the approximating map $\vec u$.
Our construction $\vec b_\delta$ interpolates linearly between that blue curve and 
the curve $\vec v(\{s=\delta\})$ on the right-end of the transition layer $T_\delta$.
}
\label{fig:extrusion}
\end{figure}

The idea is the following. In the image by $\vec v$ of the level curve at distance $s=\tfrac{\delta}{2}$,
the zenithal angle $\varphi$ (angle with respect to the downwards vertical semiaxis) goes from the ray $\varphi=\frac{\pi}{4}$ to the ray $\varphi=\frac{\pi}{2}$ (see Figure \ref{fig:regions}) for region $d$ after the dipole deformation). 
This is reflected in the formula $\varphi(z)=\tfrac{\pi}{4}(1+\frac{z}{3})$. The bottom-right point of the level curve $s=\tfrac{\delta}{2}$ 
in the reference configuration, namely, $(r=1+\tfrac{\delta}{2}, x_3=0)$, will be mapped to $z=0$, ray $\varphi=\frac{\pi}{4}$, and 
radial coordinate $s=\tfrac{\delta}{2}$. Similarly, the top-right point of the level curve $s=\tfrac{\delta}{2}$ will be mapped to $z=3$, 
ray $\varphi=\frac{\pi}{2}$, and radial coordinate $s=\tfrac{\delta}{2}$. 
These two points need to be joined by a circular arc of parameters $s=\tfrac{\delta}{2}$ and $\tfrac{\pi}{4}\leq \varphi\leq \tfrac{\pi}{2}$.

The image by $\vec v$ of the level curve at distance $s=\delta$ is also a circular arc, of parameters $s=\delta$ 
and $\tfrac{\pi}{4}\leq \varphi\leq \tfrac{\pi}{2}$.
In our construction, on the right circular arc $\vec b_\delta$ must coincide with $\vec v$.
On the left circular arc instead $\vec b_\delta$ is not the image curve $\tfrac{\delta}{2}$, since it must coincide with $\vec u$. 
Here $\vec u$ takes 
that curve and ``extrudes it horizontally'' by a distance $\omega_\e(z)$ that begins being $\e^\gamma$ when $x_3=0$, $z=0$, 
and ends up being $6\e^\gamma$ when $x_3=1$, $z=3$ 
(see Figure \ref{fig:extrusion}).
The idea, then, is to gradually reduce the magnitude of the horizontal 
extrusion, from $\omega_\e \sim \e^\gamma$ at distance $\tfrac{\delta}{2}$, to zero at distance $\delta$.
An observation to be made is that the branch $\hat r$  in the definition \eqref{eq:rPrimeRegionD} 
of $\vec u_\e$ in region $d$ does not affect us in the transition layer $\tfrac{\delta}{2}\leq s\leq \delta$. 
Indeed, $\hat r=r$ in the branch $r\geq \e^{2\gamma}$, and because of the assumption \eqref{eq:delta-eps}, in the region \(s(r,x_3)\geq \delta/2\) we have \(r\geq \delta/2\geq \e^{2\gamma}\), for \(\e\) small enough.

\smallskip
\noindent\textbf{Transition layer in region $e$.}

Taking $\e$ small enough, we can assume that $T_\delta$ does not intersect the region $e'_\e$.
In $e_\e$, we have $\dist(\vec x,U)=\rho\cos\varphi$, where \( (\rho, \theta, \varphi)\) are the spherical coordinates in this region.
We set
\begin{equation*}
\vec b_\delta(\vec x)
 = \bigl[ (1-\chi_\delta(\rho\cos\varphi))\,v_\rho(\rho,\varphi) + \chi_\delta(\rho\cos\varphi)\,u_\rho(\rho,\varphi) \bigr]
   \bigl(\sin\varphi\,\vec e_r(\theta)+\cos\varphi\,\vec e_3\bigr).
\end{equation*}
This matches $\vec u_\e$ when $\dist(\vec x,U) \le \delta/2$ and $\vec v$ when $\dist(\vec x,U) \ge \delta$.

\smallskip
\noindent\textbf{Transition layer in region $f$.}

In this region the transition is straightforward, since $\vec v$ is regular and
$\vec u_\e\to \vec v$ in $C^1$.
Let
\[
\chi(\vec x):=\frac{\delta-\dist(\vec x,U)}{\delta/2}.
\]
For $\vec x\in T_\delta\cap f$, we set
\begin{equation*}
  \vec b_\delta(\vec x):=(1-\chi(\vec x))\,\vec v(\vec x)+\chi(\vec x)\,\vec u(\vec x).
\end{equation*}
Using the explicit formulas in the Appendices \ref{app:dipole full} and
\ref{app:dipole approx}, one can check that
$|\vec u-\vec v|\le c\,\e^\gamma$ and $|D\vec u-D\vec v|\le c\,\e^\gamma$.

\medskip
\textbf{Step~2: uniform bound on the energy and orientation-preserving character}.

We have to show that the energy contribution coming from the transition layer is bounded independently of $\delta$. 

\smallskip
\noindent\textbf{Energy region $b$.}

In $b\cap T_\delta$ we write
\[
\vec x=\rho\sin\varphi\,\vec e_r(\theta)+\rho\cos\varphi\,\vec e_3,
\qquad \frac{\pi}{2}\le \varphi\le \pi,
\qquad s:=\rho-1=\dist(\vec x,U)\in\Big[\frac{\delta}{2},\delta\Big].
\]
Recall that in $b$ one has
\[
\vec b_\delta(\vec x)
=\Big(s+\chi_\delta(s)\sqrt2\,\e^\gamma\Big)\,\vec d(\varphi,\theta)
+\chi_\delta(s)\,\e^\gamma\,\vec e_3,
\qquad 
\vec d(\varphi,\theta):=\sin\Phi(\varphi)\,\vec e_r(\theta)+\cos\Phi(\varphi)\,\vec e_3,
\]
with $\Phi(\varphi):=\frac{\varphi+\pi}{2}$.
Set
\[
r_\delta(s):=s+\chi_\delta(s)\sqrt2\,\e^\gamma,
\qquad z_\delta(s):=\chi_\delta(s)\e^\gamma,
\qquad \rho=1+s.
\]
Since $\chi_\delta'(s)=-2/\delta$ on $[\delta/2,\delta]$ and $\delta\ge c_0\e^\gamma$ (see \eqref{eq:delta-eps}),
we have
\[
|r_\delta'(s)|\le 1+c\,\frac{\e^\gamma}{\delta}\le c,
\qquad |z_\delta'(s)|\le c\,\frac{\e^\gamma}{\delta}\le c.
\]
Moreover,
\[
\p_s\vec b_\delta=r_\delta'(s)\,\vec d+z_\delta'(s)\,\vec e_3,
\qquad
\p_\varphi\vec b_\delta=\frac12\,r_\delta(s)\big(\cos\Phi\,\vec e_r(\theta)-\sin\Phi\,\vec e_3\big),
\qquad
\p_\theta\vec b_\delta=r_\delta(s)\sin\Phi\,\p_\theta\vec e_r(\theta).
\]
Using the metric factors of spherical coordinates and $\rho=1+s\simeq 1$ on $T_\delta$,
and using that, for \(\frac{\pi}{4}\leq \varphi \leq \pi\),
\begin{equation} \label{eq:bound_sinus} 
\frac{\sin^2\Phi}{\sin^2 \varphi} =\frac{\cos^2\left( \frac{\varphi}{2}\right)}{\sin^2\varphi} 
= \frac{1}{4\sin^2\left( \frac{\varphi}{2}\right)}\leq 2, 
\end{equation}
we find that 
\[
|D\vec b_\delta(\vec x)|^2
\le c\,\Big(|\p_s\vec b_\delta|^2+\frac{1}{\rho^2}|\p_\varphi\vec b_\delta|^2+\frac{1}{\rho^2\sin^2\varphi}|\p_\theta\vec b_\delta|^2\Big)
\le c\,\Big(1+\frac{\e^{2\gamma}}{\delta^2}\Big),
\]
and therefore
\[
\int_{T_\delta\cap b}|D\vec b_\delta|^2\,\dd\vec x
\le c\,\Big(\delta+\frac{\e^{2\gamma}}{\delta}\Big)\le c.
\]

For the determinant, in the coordinates $(s,\varphi,\theta)$ (see, e.g., \cite[Appendix B.3]{BaHEMoRo_24PUB}) one has
\begin{align}
\det D\vec b_\delta(\vec x)
&=\frac{\det(\p_s\vec b_\delta,\p_\varphi\vec b_\delta,\p_\theta\vec b_\delta)}{\rho^2\sin\varphi}\notag\\
&=\frac12\,\frac{r_\delta(s)^2}{\rho^2}\,\frac{\sin\Phi(\varphi)}{\sin\varphi}
\Big(r_\delta'(s)+z_\delta'(s)\cos\Phi(\varphi)\Big)\notag\\
&=\frac12\,\frac{r_\delta(s)^2}{\rho^2}\,\frac{\sin\Phi(\varphi)}{\sin\varphi}
\Big(1+\e^\gamma\chi_\delta'(s)\big(\sqrt2+\cos\Phi(\varphi)\big)\Big).\label{eq:det-b-layer-b}
\end{align}
Since $\frac{\sin\Phi(\varphi)}{\sin\varphi}=\frac{1}{2\sin \left( \frac{\varphi}{2}\right)}$ (see \eqref{eq:bound_sinus}) is bounded above and below on $[\pi/2,\pi]$,
choosing $c_0$ in \eqref{eq:delta-eps} large enough yields
\[
1+\e^\gamma\chi_\delta'(s)\big(\sqrt2+\cos\Phi(\varphi)\big)\ge \frac12
\qquad\text{for }s\in[\delta/2,\delta].
\]
We infer from \eqref{eq:det-b-layer-b} that $\det D\vec b_\delta(\vec x)\ge c\,s^2$ on $T_\delta\cap b$.
Hence, 
\[
\int_{T_\delta\cap b}H(\det D\vec b_\delta)\,\dd\vec x
\le c\,\int_{\delta/2}^{\delta}s^{-2\alpha}\,\dd s \leq c,
\]
independently of~$\delta$.

\smallskip
\noindent\textbf{Energy region $d$.}

Recall that in $d\cap T_\delta$ by \eqref{eq:delta-eps} we have $r\geq \frac{\delta}{2}\geq 2\sqrt{2}\e^\gamma$.
By the formula for the gradient in 
cylindrical-cylindrical coordinates (see, e.g., \cite[Appendix B.2]{BaHEMoRo_24PUB}),
\[
    D\vec b_\delta (\vec x)
    = 
    \begin{pmatrix}
        \partial_r w_r & 0 & \partial_{x_3} w_r
        \\
        0 & \tfrac{w_r}{r} & 0
        \\
        \partial_r w_3 & 0 & \partial_{x_3} w_3
    \end{pmatrix} .
\]
In particular, 
\begin{equation}
    \label{eq:detstretch}
\det D\vec b_\delta = \frac{w_r}{r} \frac{\partial(w_r,w_3)}{\partial(r,x_3)}.
\end{equation}
\medskip

Let us first study the tangential stretch $\frac{w_r}{r}$. Thanks to \eqref{eq:boundOmegaEps}, we have
\begin{gather*}
	 w_r(\vec x) = \frac{\delta - s(r,x_3)}{\delta/2}
		\omega_\e\big ( z(r,x_3)\big ) + s(r,x_3)\sin\varphi(z(r,x_3))
	\\ \Rightarrow \quad
		s\sin \varphi \leq w_r(\vec x) \leq \frac{\delta/2}{\delta/2}\cdot 6\e^\gamma +s\sin\varphi
\end{gather*}
\begin{align}
    \label{eq:estimateStretch}
	\Rightarrow \quad
		\frac{s\sin \varphi}{r} \leq \frac{w_r(\vec x)}{r} \leq 2 +\frac{s}{r}\sin\varphi.
\end{align}
The lower bound for $\tfrac{w_r(\vec x)}{r}$ will be used to compare $\det D\vec b_\delta$ with $\det D\vec v$. 
By construction of $\vec b_\delta$, when we are close to the axis, we have $s=r$ (see \eqref{eq:def_g_specific}). In the rest of the region,
$s$ is still controlled by $r$; see Appendix \ref{app:dipole full}. Therefore, $\tfrac{w_r(\vec x)}{r}$ is bounded from above by a constant, independently of $\e$.

Regarding the $2 \times 2$ minor in the expression for $\det D\vec b_\delta$, observe that    
\[
    \frac{\partial(w_r,w_3)}{\partial(r,x_3)}
    = \frac{\partial(w_r,w_3)}{\partial(s,z)} \frac{\partial(s,z)}{\partial(r,x_3)} .
\]
Note also that 
\[
    \frac{\partial(s,z)}{\partial(r,x_3)} = \frac{\partial \vec g}{\partial r}\wedge \frac{\partial \vec g}{ \partial x_3}
\]
since $(r,x_3)\mapsto (s,z)$ is the 2D representation of the auxiliary transformation 
$\vec g(r\vec e_r+x_3\vec e_3) = s(r,x_3)\vec e_r + z(r,x_3)\vec e_3$. This 2D representation is bi-Lipschitz, as shown in Figure \ref{fig:function g}. Hence,
\[
    c^{-1}\frac{\partial(w_r,w_3)}{\partial(s,z)} \leq \frac{\partial(w_r,w_3)}{\partial(r,x_3)} \leq c\,\frac{\partial(w_r,w_3)}{\partial(s,z)}.
\]
On the other hand,
\begin{align}
    \frac{\partial(w_r,w_3)}{\partial(s,z)}
    &= 
    \begin{vmatrix}
        -\tfrac{2}{\delta}\omega_\e(z) + \sin\varphi 
        &
        2\frac{\delta-s}{\delta}\omega_\e'(z) + \tfrac{\pi}{12}s\cos\varphi
        \\
        -\cos \varphi & \frac{\pi}{12}s\sin\varphi
    \end{vmatrix}\nonumber
    \\ 
    &= 
        \label{eq:explicitDetOmegaPrime}
    \tfrac{\pi}{12} s 
        -\tfrac{2}{\delta} \omega_\e(z)\cdot \tfrac{\pi}{12}s\sin\varphi
        + 
        \underbrace{\cos\varphi\cdot 2\tfrac{\delta-s}{\delta}\omega_\e'(z)}_{\ge 0}
    \\
        \label{eq:LBabsorb}
    & \geq 
    \tfrac{\pi}{12}s \Bigg ( 1- 2\frac{\omega_\e(z)}{\delta}\sin\varphi \Bigg)
    \underset{\text{\eqref{eq:boundOmegaEps}}}
    {\geq }
    \tfrac{\pi}{12}s \Bigg ( 1- 2\frac{6\e^\gamma}{\delta}\sin\varphi \Bigg)
    \underset{\text{\eqref{eq:delta-eps}}}{\geq}
    c\,s.
\end{align}
Since $\tfrac{\pi}{4}\leq \varphi \leq \tfrac{\pi}{2}$,
combining \eqref{eq:detstretch}, 
\eqref{eq:estimateStretch}, and \eqref{eq:LBabsorb}
yields that
\[
    \det D\vec b_\delta (\vec x)\geq c\,\frac{s^2(r,x_3)\sin\varphi(z(r,x_3))}{r}
    \geq c\,\det D\vec v(\vec x)
    \geq c\,\frac{s^2(r,x_3)}{r}.
\]
Similarly, from 
\eqref{eq:omega-prime},
\eqref{eq:detstretch}, 
\eqref{eq:estimateStretch}, 
and
\eqref{eq:explicitDetOmegaPrime},
we obtain the upper bound
\[
    \det D\vec b_\delta(\vec x) \leq c.
\]
Therefore, $\int_{T_\delta\cap d} H(\det D\vec b_\delta)\leq c$ independently of~$\delta$
(compare with \eqref{eq:integrability_H}).    
After a change of coordinates from $(r,x_3)$ to $(s,z)$,
using \eqref{eq:boundOmegaEps}
and \eqref{eq:omega-prime},
each term on $D\vec b_\delta$ other than the tangential stretch $\tfrac{w_r}{r}$
can be controlled.

\smallskip
\noindent\textbf{Energy region $e$.}

In this region the map $\vec v$ degenerates at the boundary $\varphi=\pi/2$.
We use spherical coordinates. The radial component is given by
$b_\rho = \chi_\delta u_\rho + (1-\chi_\delta) v_\rho$.
Since the angular components coincide ($b_\varphi = v_\varphi = u_\varphi = \varphi$), the Jacobian\footnote{See, e.g., \cite[Appendix B.3]{BaHEMoRo_24PUB}} is
\begin{equation*}
    \det D\vec b_\delta = \frac{(b_\rho)^2}{\rho^2} \p_\rho b_\rho.
\end{equation*}
We must check that $\p_\rho b_\rho > 0$. Differentiating the interpolation gives
\begin{equation*}
    \p_\rho b_\rho = \chi_\delta \p_\rho u_\rho + (1-\chi_\delta) \p_\rho v_\rho + \chi'_\delta (u_\rho - v_\rho) \p_\rho \dist(\vec x, U).
\end{equation*}
In this region, $\dist(\vec x, U) = \rho \cos\varphi$, so that $\p_\rho \dist(\vec x, U) = \cos\varphi$.
Specifically, we have the following estimates:
\begin{enumerate}[(i)]
    \item For the convex combination term: since $\p_\rho u_\rho \ge \cos\varphi$
    (see \cite[p.~39, Eq.~(3.33)]{BaHEMoRo_24PUB}) and $\p_\rho v_\rho = \cos\varphi$, we have
    \[
    \chi_\delta \p_\rho u_\rho + (1-\chi_\delta) \p_\rho v_\rho \ge \cos\varphi.
    \]
    \item For the perturbation term: we have $\chi'_\delta = -2/\delta$ and 
    $|u_\rho - v_\rho| \le 7\e^\gamma$. 
    Indeed,
    writing 
    \begin{align*}
u_\rho(\rho,\varphi)-v_\rho(\rho,\varphi)
    = \frac{1-\rho}{1-\e}(u_\rho(\e,\varphi) - \cos\varphi )
	+ \frac{\rho-\e}{1-\e}\cdot 6\e^\gamma
	 -\frac{\e(1-\rho)}{1-\e}\cos\varphi
    \end{align*}
    and considering that 
    $\cos\varphi\leq u_\rho(\e, \varphi)\leq \cos\varphi + 6\e^\gamma$ (see \cite[Eq.~(3.10)]{BaHEMoRo_24PUB}),
    it is possible to see that 
    \begin{align*}
u_\rho(\rho,\varphi)-v_\rho(\rho,\varphi)
	\geq  \frac{-\e(1-\rho)}{1-\e}\cos\varphi
	\geq -2\e\cos\varphi.
\end{align*}
    On the other hand,
    \begin{align*}
 u_\rho - v_\rho
	\leq
	\frac{1-\rho}{1-\e}\cdot 6\e^\gamma
	+ \frac{\rho-\e}{1-\e}\cdot 6\e^\gamma
	+ \frac{1}{1-\e}\cdot \e^\gamma\cdot 1
	\leq 7\e^\gamma.
\end{align*}
    Consequently,
    \[
    \chi'_\delta (u_\rho - v_\rho) \p_\rho \dist(\vec x, U) \ge - \frac{2}{\delta} (7\e^\gamma) \cos\varphi = -\frac{14\e^\gamma}{\delta}\cos\varphi.
    \]
\end{enumerate}
Combining these, we factor out the term $\cos\varphi$:
\begin{equation*}
    \p_\rho b_\rho \ge \cos\varphi \left( 1 - \frac{14\e^\gamma}{\delta} \right).
\end{equation*}
Since $\cos\varphi>0$ for $\varphi\in[0,\pi/2)$, we have that $\p_\rho b_\rho$
is comparable to $\p_\rho v_\rho$ up to a strictly positive multiplicative constant,
provided we choose $c_0>14$ in \eqref{eq:delta-eps}.
In addition, the lower bound on $u_\rho-v_\rho$ established above gives
\begin{align*}
b_\rho
=\chi_\delta u_\rho+(1-\chi_\delta)v_\rho
= v_\rho + \chi_\delta(u_\rho - v_\rho)
\ge v_\rho - 2\e\cos\varphi
= (1+\rho-2\e)\cos\varphi.
\end{align*}
Since $\rho\le 1$ and $\e$ is small (in particular $\e<1/4$), we have
$1+\rho-2\e\ge \frac{1}{2}(1+\rho)$, and therefore
$b_\rho^2\ge \frac{1}{4}(1+\rho)^2\cos^2\varphi = \frac{1}{4}\,v_\rho^2$.
Hence
\[
\det D\vec b_\delta
=\frac{b_\rho^2}{\rho^2}\,\p_\rho b_\rho
\ge \frac{1}{4}\,\frac{v_\rho^2}{\rho^2}\cdot c\,\p_\rho v_\rho
= c\,\det D\vec v
\qquad\text{on }T_\delta\cap e.
\]

We now bound the $H$ term.
Since $\det D\vec b_\delta\ge c\,\det D\vec v$ on $T_\delta\cap e$ and $H$ is 
decreasing near~$0$, we have
$H(\det D\vec b_\delta)\le H(c\,\det D\vec v)$.
Therefore, from the integrability of $H(\det D\vec v)$ established 
in Appendix~\ref{app:dipole full}, $\int_{T_\delta\cap e}H(\det D\vec b_\delta)\,\dd\vec x\leq c$
independently of~$\delta$.

\smallskip
\noindent\textbf{Energy region $f$.}

In $f \cap T_\delta$ we use the convex interpolation
\[
  \vec b_\delta = (1-\chi)\vec v + \chi \vec u,
  \quad
  \chi(\vec x) := \frac{\delta - \dist(\vec x,U)}{\delta/2} \in [0,1],
  \quad
  |\nabla \chi| = \frac{2}{\delta}.
\]

Since
\[
  D\vec b_\delta
  = D\vec v + \chi\, D(\vec u - \vec v)
    + (\vec u - \vec v) \otimes \nabla\chi,
\]
and, by the explicit formulas in Appendices~\ref{app:dipole full} and~\ref{app:dipole approx},
$|\vec u - \vec v| \leq c\,\e^\gamma$ and
$|D\vec u - D\vec v| \leq c\,\e^\gamma$ in this region, we get
\[
  |D\vec b_\delta|
  \leq |D\vec v|
       + 6\e^\gamma\!\Big(\!\sqrt{2} + \frac{2}{\delta}\Big).
\]
Since the dipole $\vec v$ is $C^1$ in region~$f$, it follows that
\[
  \int_{T_\delta \cap f} |D\vec b_\delta|^2 \, d\vec x \leq c.
\]

For the determinant, note that
\[
  \vec b_\delta
  = b_\rho\!\big(\vec x(\rho,\varphi)\big)\,
    (\sin\varphi\, \vec e_r + \cos\varphi\, \vec e_3),
  \qquad
  b_\rho = v_\rho(\rho,\varphi) + \chi\!\big(\vec x(\rho,\varphi)\big)\cdot 6\e^\gamma.
\]
Hence $b_\rho \geq v_\rho(\rho,\varphi)$, and
\[
  \partial_\rho b_\rho
  = \partial_\rho v_\rho + 6\e^\gamma\, \nabla\chi \cdot \partial_\rho \vec x
  \geq \partial_\rho v_\rho - 6\e^\gamma\, |\nabla\chi|
  = 1 - \frac{12\e^\gamma}{\delta}.
\]
Therefore, if $c_0 > 12$ in \eqref{eq:delta-eps},
\begin{equation}\label{eq:detf}
  \det D\vec b_\delta
  = \frac{b_\rho^2}{\rho^2}\, \partial_\rho b_\rho
  \geq \frac{v_\rho^2}{\rho^2}
       \Big(1 - \frac{12\e^\gamma}{\delta}\Big)
  \geq c\, \det D\vec v
  \quad \text{on } T_\delta \cap f.
\end{equation}

For the $H$ term, since $H$ is decreasing near zero,
\eqref{eq:detf} gives
$H(\det D\vec b_\delta) \leq H(c\,\det D\vec v)$
on~$T_\delta \cap f$.
In particular, $\int_{T_\delta \cap f} H(\det D\vec b_\delta)\, d\vec x \leq c$
independently of~$\delta$.

\bigskip
Collecting the constraints on the constant $c_0$ in \eqref{eq:delta-eps}, we need:
\begin{itemize}
\item in region $b$, positivity of the Jacobian: $c_0>4\sqrt2$;
\item in region $d$, positivity of the Jacobian: $c_0>12$;
\item in region $e$, positivity of $\p_\rho b_\rho$: $c_0>14$;
\item in region $f$, positivity of $\p_\rho b_\rho$: $c_0>12$;
\item in region $b$, injectivity of the planar profile: $c_0>2\sqrt2$;
\item matching with $\vec\psi\equiv\mathbf{id}$ at $s=\delta/2$: $c_0\ge 4$.
\end{itemize}
Thus the binding condition is $c_0>14$.

\medskip
\textbf{Step~3: bi-Lipschitz properties.}

We prove that the map $\vec b_\delta$ constructed in Step~1 is bi-Lipschitz on $B(\vec 0,4)$.
Recall that $\vec b_\delta=\vec u$ on $U_{\delta/2}$ and $\vec b_\delta=\vec v$ on $B(\vec 0,4)\setminus U_\delta$.
Moreover, $\vec b_\delta$ is continuous across all interfaces by construction.

\smallskip
\noindent\emph{(i) $\vec b_\delta$ is Lipschitz.}
The domain $B(\vec 0,4)$ is partitioned into the following subdomains: the core $U_{\delta/2}$, the exterior
$B(\vec 0,4)\setminus U_\delta$, and the four pieces $T_\delta\cap b$, $T_\delta\cap d$, $T_\delta\cap e$, $T_\delta\cap f$.
On each of these pieces, $\vec b_\delta$ is 
piecewise
$C^1$ and the estimates of Step~2 (together with the explicit formulas
in Step~1) give the bound
\begin{equation}\label{eq:Dbdelta-Linfty}
\|D\vec b_\delta\|_{L^\infty(B(\vec 0,4))}\le C(\delta)<\infty.
\end{equation}
Since the traces match on the interfaces, no jump part appears in the distributional gradient; by \eqref{eq:Dbdelta-Linfty}, $\vec b_\delta$ is Lipschitz.

\smallskip
\noindent\emph{(ii) Pointwise bound on the inverse matrix.}
We apply the elementary estimate
\begin{equation}\label{eq:inv-bound}
|\vec A^{-1}|\le \frac{c\,|\vec A|^2}{\det \vec A} , \qquad \text{for any  } \vec A\in\R^{3\times 3} \text{ with } \det \vec A>0 
\end{equation}
to $\vec A=D\vec b_\delta(\vec x)$.
By construction, $\vec b_\delta$ coincides with $\vec u=\vec u_\e$ on $U_{\d/2}$
and with $\vec v$ on $B(\vec 0,4)\setminus U_\d$.
Since $\vec u$ is bi-Lipschitz on $B(\vec 0,4)$
(Appendix~\ref{app:dipole approx}, property~(i)) and $\vec v$ is
bi-Lipschitz on $B(\vec 0,4)\setminus U_\d$ (where it is regular),
$\det D\vec b_\d$ is bounded away from zero on each of these two
pieces, with constants that may depend on $\d$ (and hence on $\e$).
It remains to check its positivity in the transition layer $T_\d$.
There, the determinant estimates established in Step~2 give:
\begin{itemize}
\item In regions $b$, $d$, and $f$:
$\det D\vec b_\d\ge c\,s^2/r$.
Since $s\ge \d/2$ and $r\le 3$ on $T_\d$, we obtain
$\det D\vec b_\d\ge c\,\d^2$.
\item In region $e$:
$\det D\vec b_\d\ge c\,\det D\vec v
=c\,(1+\rho)^2\rho^{-2}\cos^3\!\varphi$.
On $T_\d\cap e$ one has $\dist(\vec x,U)=\rho\cos\varphi\ge \d/2$
and $\rho\le 1$, so $\cos\varphi\ge \d/(2\rho)\ge \d/2$
and $(1+\rho)^2/\rho^2\ge 1$. Hence
$\det D\vec b_\d\ge c\,\d^3$.
\end{itemize}
Combining these bounds with the positivity on $U_{\d/2}$ and
$B(\vec 0,4)\setminus U_\d$, we conclude that
\begin{equation}\label{eq:det-lower-global}
m(\d):=\essinf_{B(\vec 0,4)} \det D\vec b_\d \;>\;0 .
\end{equation}
The value of $m(\d)$ depends on $\d$ through both the transition-layer
estimates and the bi-Lipschitz constants of $\vec u_{\e(\d)}$.
Combining \eqref{eq:Dbdelta-Linfty}, \eqref{eq:inv-bound} and \eqref{eq:det-lower-global} yields
\begin{equation}\label{eq:Dbdelta-inv-Linfty}
\|(D\vec b_\delta)^{-1}\|_{L^\infty(B(\vec 0,4))}\le
\frac{c\,\|D\vec b_\delta\|_{L^\infty}^2}{m(\delta)} \;=\; C(\delta)<\infty .
\end{equation}

\smallskip
\noindent\emph{(iii) Injectivity and Lipschitz continuity of the inverse.}
We next argue that $\vec b_\delta$ is a homeomorphism.
On $U_{\delta/2}$ and on $B(\vec 0,4)\setminus U_\delta$ this follows from the fact that $\vec u$ and $\vec v$
are homeomorphisms. In the transition layer $T_\delta$ we treat each region separately:
\begin{itemize}
\item In region $b$, the map $\vec b_\delta$ has the form
\[
\vec b_\delta(\rho,\varphi,\theta)
= r_\delta(\rho-1)\,\vec d(\varphi,\theta)+z_\delta(\rho-1)\,\vec e_3,
\]
where
\[
r_\delta(s):=s+\chi_\delta(s)\sqrt2\,\e^\gamma,
\qquad
z_\delta(s):=\chi_\delta(s)\e^\gamma.
\]
The angular variables $(\varphi,\theta)$ are preserved, and the planar map
$s\mapsto (r_\delta(s),z_\delta(s))$ is injective on $[\delta/2,\delta]$ since
\[
r_\delta'(s)=1-\frac{2\sqrt2\,\e^\gamma}{\delta}>0
\]
whenever $c_0>2\sqrt2$.
\item In region $d$, the planar representative in $(r,x_3)$ is the composition of the bi-Lipschitz map
$(r,x_3)\mapsto (s,z)$ (planar representative of $\vec g$) with the explicit map $(s,z)\mapsto (w_r,w_3)$
given by \eqref{eq:wr-def}. The determinant lower bound in $d\cap T_\delta$ (see Step~2) prevents
degeneracy, and the choice \eqref{eq:delta-eps} (with $c_0$ large) ensures that the images of the two boundary
components $\{s=\delta/2\}$ and $\{s=\delta\}$ are separated (see Figure~\ref{fig:extrusion}), hence injectivity holds.
\item In region $e$, the angular variables $(\varphi,\theta)$ are again preserved, and the radial
profile $b_\rho(\rho,\varphi)$ is strictly increasing in $\rho$ because
\[
\p_\rho b_\rho\ge \cos\varphi\left(1-\frac{14\e^\gamma}{\delta}\right)>0
\]
by Step~2. Hence, for every $\varphi$, the map
$\rho\mapsto b_\rho(\rho,\varphi)$ is injective.
\item In region $f$, $\vec v$ is regular and $\vec u_{\e} \to \vec v$ in $C^1$; thus, for $\e$ sufficiently small,
the convex interpolation defining $\vec b_\delta$ remains injective (this is a standard stability property of
bi-Lipschitz maps under $C^1$ perturbations).
\end{itemize}
Since $\vec b_\delta$ is continuous and injective on the compact set $\overline{B(\vec 0,4)}$, it is a homeomorphism onto
its image.

Finally, because $\vec b_\delta$ is piecewise $C^1$ and $\det D\vec b_\delta>0$, the restriction of
$\vec b_\delta$ to each piece is a $C^1$ diffeomorphism. Hence the inverse is $C^1$ on each
corresponding image piece and satisfies
\[
D(\vec b_\delta^{-1})=(D\vec b_\delta)^{-1}\circ \vec b_\delta^{-1}
\]
there. Using \eqref{eq:Dbdelta-inv-Linfty}, we obtain a Lipschitz bound for $\vec b_\delta^{-1}$ on
each image piece. Since the image pieces cover $\vec b_\delta(B(\vec 0,4))$ and intersect only along
their boundaries (where the traces match), the inverse $\vec b_\delta^{-1}$ is globally Lipschitz on
$\vec b_\delta(B(\vec 0,4))$.

Therefore $\vec b_\delta$ is bi-Lipschitz.
\end{proof}


\subsection*{Acknowledgements} 

D.~Henao was supported by FONDECYT grant N.~1231401 and by
Center for Mathematical Modeling, FB210005, Basal ANID Chile.

C. Mora-Corral has been supported by the Spanish Agencia Estatal de Investigaci\'on through projects PID2024-158664NB-C2, PCI2024-155023-2 and CEX-2023-001347-S.

The research of R. Rodiac  has been supported by the French government, through the \(\textbf{UCA}^{JEDI}\) Investments in the Future project managed by the National Research Agency (ANR) with the reference number ANR-15-IDEX-01.

The hospitality of Vera \& Asociados during the preparation of the manuscript is gratefully acknowledged.

\bibliographystyle{abbrv}
\bibliography{BiblioLavrentiev}

\begin{thebibliography}{10}

\bibitem{AlKrMo24}
S.~Almi, S.~Kr{\"o}mer, and A.~Molchanova.
\newblock A new example for the {Lavrentiev} phenomenon in nonlinear
  elasticity.
\newblock {\em Z. Angew. Math. Phys.}, 75(1):21, 2024.
\newblock Id/No 2.

\bibitem{Ball1982}
J.~M. Ball.
\newblock Discontinuous equilibrium solutions and cavitation in nonlinear
  elasticity.
\newblock {\em Philos. Trans. Roy. Soc. London Ser. A}, 306(1496):557--611,
  1982.

\bibitem{Ball02}
J.~M. Ball.
\newblock Some open problems in elasticity.
\newblock In {\em Geometry, mechanics, and dynamics}, pages 3--59. Springer,
  New York, 2002.

\bibitem{BaMi85}
J.~M. Ball and V.~J. Mizel.
\newblock One-dimensional variational problems whose minimizers do not satisfy
  the {Euler}-{Lagrange} equation.
\newblock {\em Arch. Ration. Mech. Anal.}, 90:325--388, 1985.

\bibitem{BallMurat1984}
J.~M. Ball and F.~Murat.
\newblock {$W^{1,p}$}-quasiconvexity and variational problems for multiple
  integrals.
\newblock {\em J. Funct. Anal.}, 58(3):225--253, 1984.

\bibitem{BaHeMo17}
M.~Barchiesi, D.~Henao, and C.~Mora-Corral.
\newblock Local invertibility in {S}obolev spaces with applications to nematic
  elastomers and magnetoelasticity.
\newblock {\em Arch. Rational Mech. Anal.}, 224(2):743--816, 2017.

\bibitem{BaHEMoRo_23PUB}
M.~Barchiesi, D.~Henao, C.~Mora-Corral, and R.~Rodiac.
\newblock Harmonic dipoles and the relaxation of the neo-{H}ookean energy in
  3{D} elasticity.
\newblock {\em Arch. Ration. Mech. Anal.}, 247(4):Paper No. 70, 46, 2023.

\bibitem{BaHEMoRo_24PUB}
M.~Barchiesi, D.~Henao, C.~Mora-Corral, and R.~Rodiac.
\newblock On the lack of compactness in the axisymmetric neo-hookean model.
\newblock {\em Forum of Mathematics, Sigma}, 12:e26, 2024.

\bibitem{BaHEMoRo_24_SIAM}
M.~Barchiesi, D.~Henao, C.~Mora-Corral, and R.~Rodiac.
\newblock A relaxation approach to the minimization of the neo-{Hookean} energy
  in {3D}.
\newblock {\em SIAM J. Math. Anal.}, 56(6):7830--7845, 2024.

\bibitem{Bouchala_Hencl_Zhu_2024}
O.~Bouchala, S.~Hencl, and Z.~Zhu.
\newblock Weak limit of {{\(W^{1,2}\)}} homeomorphisms in {{\(\mathbb{R}^3\)}}
  can have any degree.
\newblock {\em Ann. Fenn. Math.}, 49(2):547--560, 2024.

\bibitem{Campbell_Dolezalova_Hencl_2025}
D.~Campbell, A.~Dole{\v{z}}alov{\'a}, and S.~Hencl.
\newblock Mission $p<n-1$: possible -- nonlinear elasticity beyond conventional
  limits.
\newblock Preprint, {arXiv}:2506.07543, 2025.

\bibitem{CiNe87}
P.~G. Ciarlet and J.~Ne{\v{c}}as.
\newblock Injectivity and self-contact in nonlinear elasticity.
\newblock {\em Arch. Rational Mech. Anal.}, 97(3):171--188, 1987.

\bibitem{CoDeLe03}
S.~Conti and C.~De~Lellis.
\newblock Some remarks on the theory of elasticity for compressible
  {N}eohookean materials.
\newblock {\em Ann. Sc. Norm. Super. Pisa Cl. Sci. (5)}, 2(3):521--549, 2003.

\bibitem{Dolezalova_Hencl_Maly_2023}
A.~Dole{\v{z}}alov{\'a}, S.~Hencl, and J.~Mal{\'y}.
\newblock Weak limit of homeomorphisms in {{\({W}^{1, n-1}\)}} and ({INV})
  condition.
\newblock {\em Arch. Ration. Mech. Anal.}, 247(5):54, 2023.
\newblock Id/No 80.

\bibitem{Dolezalova_Hencl_Molchanova_2024}
A.~Dole{\v{z}}alov{\'a}, S.~Hencl, and A.~Molchanova.
\newblock Weak limit of homeomorphisms in {{\(W^{1,n-1}\)}}: invertibility and
  lower semicontinuity of energy.
\newblock {\em ESAIM, Control Optim. Calc. Var.}, 30:32, 2024.
\newblock Id/No 37.

\bibitem{Evans_Gariepy_2015}
L.~C. Evans and R.~F. Gariepy.
\newblock {\em Measure theory and fine properties of functions}.
\newblock Textb. Math. Boca Raton, FL: CRC Press, revised ed. edition, 2015.

\bibitem{Federer69}
H.~Federer.
\newblock {\em Geometric measure theory}.
\newblock Springer, New York, 1969.

\bibitem{Foss03}
M.~Foss.
\newblock Examples of the {Lavrentiev} phenomenon with continuous {Sobolev}
  exponent dependence.
\newblock {\em J. Convex Anal.}, 10(2):445--464, 2003.

\bibitem{FoHrMi03a}
M.~Foss, W.~Hrusa, and V.~J. Mizel.
\newblock The {Lavrentiev} phenomenon in nonlinear elasticity.
\newblock {\em J. Elasticity}, 72(1-3):173--181, 2003.

\bibitem{FoHrMi03b}
M.~Foss, W.~J. Hrusa, and V.~J. Mizel.
\newblock The {Lavrentiev} gap phenomenon in nonlinear elasticity.
\newblock {\em Arch. Ration. Mech. Anal.}, 167(4):337--365, 2003.

\bibitem{GiMoSo98I}
M.~Giaquinta, G.~Modica, and J.~Sou\v{c}ek.
\newblock {\em {Cartesian currents in the calculus of variations. {I}}}.
\newblock Springer-Verlag, Berlin, 1998.

\bibitem{GiMoSo98II}
M.~Giaquinta, G.~Modica, and J.~Sou\v{c}ek.
\newblock {\em {Cartesian currents in the calculus of variations. {II}}}.
\newblock Springer-Verlag, Berlin, 1998.

\bibitem{Hardt_Lin_1986}
R.~Hardt and F.-H. Lin.
\newblock A remark on {$H^1$} mappings.
\newblock {\em Manuscripta Math.}, 56(1):1--10, 1986.

\bibitem{Hardt_Lin_1992}
R.~Hardt, F.-H. Lin, and C.-C. Poon.
\newblock Axially symmetric harmonic maps minimizing a relaxed energy.
\newblock {\em Comm. Pure Appl. Math.}, 45(4):417--459, 1992.

\bibitem{HeMo10}
D.~Henao and C.~Mora-Corral.
\newblock Invertibility and weak continuity of the determinant for the
  modelling of cavitation and fracture in nonlinear elasticity.
\newblock {\em Arch. Rational Mech. Anal}, 197:619--655, 2010.

\bibitem{HeMo11}
D.~Henao and C.~Mora-Corral.
\newblock Fracture surfaces and the regularity of inverses for {BV}
  deformations.
\newblock {\em Arch. Rational Mech. Anal.}, 201(2):575--629, 2011.

\bibitem{HeMo12}
D.~Henao and C.~Mora-Corral.
\newblock Lusin's condition and the distributional determinant for deformations
  with finite energy.
\newblock {\em Adv. Calc. Var.}, 5(4):355--409, 2012.

\bibitem{Kalayanamit_2025}
P.~Kalayanamit.
\newblock Sobolev regularity of the inverse for minimizers of the neo-{H}ookean
  energy satisfying condition {INV}.
\newblock {\em Proceedings of the Royal Society of Edinburgh: Section A
  Mathematics}, page 1–21, 2025.

\bibitem{Mazowiecka_Strzelecki_2017}
K.~Mazowiecka and P.~Strzelecki.
\newblock The {Lavrentiev} gap phenomenon for harmonic maps into spheres holds
  on a dense set of zero degree boundary data.
\newblock {\em Adv. Calc. Var.}, 10(3):303--314, 2017.

\bibitem{MuSp95}
S.~M{\"u}ller and S.~J. Spector.
\newblock An existence theory for nonlinear elasticity that allows for
  cavitation.
\newblock {\em Arch. Rational Mech. Anal.}, 131(1):1--66, 1995.

\end{thebibliography}

\end{document}